\newtheorem{theorem}{Theorem}[section]
\newtheorem{prop}[theorem]{Proposition}
\newtheorem{lemma}[theorem]{Lemma}
\theoremstyle{definition}
\newtheorem*{acks}{Acknowledgments}
\newtheorem{definition}[theorem]{Definition}
\theoremstyle{remark}
\newtheorem{remark}[theorem]{Remark}
\newtheorem{ex}[theorem]{Example}
\newcommand{\E}{{\mathbb{E}}}
\newcommand{\F}{{\mathbb{F}}}
\newcommand{\bbH}{{\mathbb{H}}}
\newcommand{\I}{{\mathbb{I}}}
\newcommand{\N}{{\mathbb{N}}}
\newcommand{\Z}{{\mathbb{Z}}}
\newcommand{\cC}{{\mathcal{C}}}
\newcommand{\cF}{{\mathcal{F}}}
\newcommand{\lk}{{\mathrm{lk}}}
\newcommand{\sm}{{\mathrm{sm}}}
\newcommand{\ttop}{{\mathrm{top}}}
\title{New examples of topologically slice links}
\author{Alex Manchester}
\thanks{The author was partially supported by the RTG award NSF DMS-1745670.}
\begin{document}

\begin{abstract}
In 2007, Cochran-Friedl-Teichner gave sufficient conditions for when a link obtained by multi-infection is topologically slice involving a Milnor's invariant condition on the infecting string link. In this paper, we give a different Milnor's invariant condition which can handle some cases which the original theorem cannot. Along the way, we also give sufficient conditions for a multi-infection to be $(n-0.5)$-solvable, where we require that the infecting string link have vanishing pairwise linking numbers, which can be seen as handling an additional ``$(-0.5)$-solvable'' case of a well-known result about the relationship between satellite operations and the solvable filtration of Cochran-Orr-Teichner.
\end{abstract}

\maketitle

\section{Introduction}
\label{Sec:Intro}

The known constructions of slice links are still quite limited in both the smooth and the topological settings. Smoothly, the famous Slice-Ribbon conjecture asserts that every smoothly slice link arises as the boundary of a set of ribbon disks, and thus can be constructed in a standard way (see e.g. \cite{rolfsenKnotsLinks2003}). However, there are some known smoothly slice links which are not known to arise in this way (see \cite{gompfFiberedKnotsPotential2010} and \cite{abeConstructionSliceKnots2015}). Another interesting construction of smoothly slice links can be found in \cite{cochranCounterexamplesKauffmanConjectures2014}, though these are known to arise as the boundary of a set of ribbon disks.

	Freedman showed that any knot with Alexander polynomial 1 (which includes all Whitehead doubles) is topologically slice (see \cite{freedmanNewTechniqueLink1985}), and together with Teichner showed that the Whitehead doubles of certain links are topologically slice (see \cite{freedman4ManifoldTopologyII1995}). It is a corollary of work of Donaldson in \cite{donaldsonApplicationGaugeTheory1983} that the Whitehead double of the right-handed trefoil is not smoothly slice even though it is topologically slice. This was first noticed by Akbulut and Casson, and appeared in the literature in \cite{cochranApplicationsDonaldsonTheorems1988}. In \cite{cochranNewConstructionsSlice2007}, Cochran-Friedl-Teichner generalized many of these constructions via Theorem \ref{Thm:COTThm} below. Before stating the theorem, we will state some necessary definitions.

\begin{definition}
	An {\it $r$-multi-disk} $\E$ is an oriented disk $D^2$ together with $r$ ordered embedded open disks $E_1,...,E_r$ with disjoint closures.
\end{definition}

\begin{definition}
	Given a link $L \subset S^3$, a (smoothly) embedded multi-disk $\E \subset S^3$ is {\it proper} if $L$ intersects the image transversely and only in the the subdisks $E_1,...,E_r$.
\end{definition}

\begin{figure}
	\includegraphics[scale=0.5]{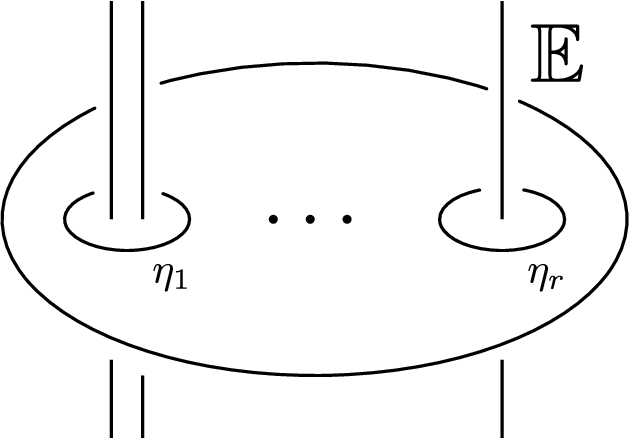}
	\caption{Proper multi-disk}
	\label{Fig:Multidisk}
\end{figure}

\noindent We will denote the boundaries of $E_1,...,E_r$ by $\eta_1,...,\eta_r$. See Fig.\ \ref{Fig:Multidisk}.

Given an unoriented $r$-component string link $J = J_1 \sqcup\cdots\sqcup J_r \subset D^2 \times I$, we can thicken a proper multi-disk $\E$ and replace the resulting handlebody by $(D^2 \times I) \setminus \nu(J)$ via the following homeomorphism on the boundaries: The thickening $\bbH$ of $\E$ can be seen as $(D^2 \times I) \setminus \nu(U)$ where $U$ is trivial string link. The boundary of both $(D^2 \times I) \setminus \nu(J)$ and $(D^2 \times I) \setminus \nu(U)$ consist of the union of a $2r$-punctured sphere which is a portion of the boundary of $D^2 \times I$, with $r$ cylindrical ``tubes'' attached. For the gluing, use the homeomorphism which is the identity on the $2r$-punctured sphere and which takes each tube to the corresponding tube, identifying the trivial framings of $U$ and $J$. The resulting link is called the multi-infection of $L$ by $J$ along $\E$. Intuitively, this amounts to ``tying $J$ into $L$ .'' See Fig.\ \ref{Fig:Multiinfection}.

\begin{figure}
	\includegraphics{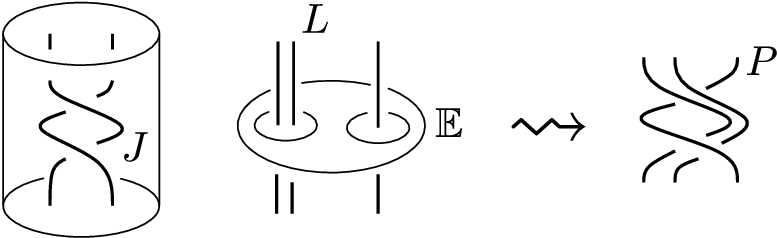}
	\caption{Multi-infection of $L$ by $J$ along $\E$}
	\label{Fig:Multiinfection}
\end{figure}

Many of the known examples of topologically slice links can be produced by multi-infection, as in the following theorem:

\begin{theorem}[Theorem 1.5 of \cite{cochranNewConstructionsSlice2007}]
\label{Thm:COTThm}
Let $L \subset S^3$ be an $m$-component slice link with slice disks $\Delta=\Delta_1\sqcup\cdots\sqcup\Delta_m \subset B^4$. Let $W:=B^4\setminus\nu(\Delta)$. Let $\E$ be a proper $r$-multi-disk in $S^3$ such that $\eta_1,...,\eta_r$ are nullhomotopic in $W$ and thus bound immersed disks $\delta=\delta_1\cup\cdots\cup\delta_r$ with $c$ (self-)intersections. Let $J$ be an $r$-component string link whose closure $\hat{J}$ has Milnor $\bar\mu$-invariants vanishing up to (and including) length \hl{$2c$}.\footnote{We have highlighted the quantities \hl{$2c$} and \hl{$c+r$} throughout to emphasize the difference between Theorems \ref{Thm:COTThm} and \ref{Thm:Slice}. Except for these quantities, the statements are identical.} Then the multi-infection of $L$ by $J$ is topologically slice.
\end{theorem}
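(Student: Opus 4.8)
The plan is to realize the multi-infected link $L_J$ as the boundary of a topological $4$-manifold assembled from the slice-disk exterior $W$, and then to recognize that manifold as the exterior of a system of locally flat disks, Freedman's disk embedding theorem doing the final work and the hypothesis on the Milnor invariants of $\hat J$ entering precisely so that the fundamental group controlling the embedding problem is \emph{good}. The decisive structural feature is that \emph{no} hypothesis is placed on $\pi_1(W)$: so one cannot hope to ``recognize $W_J$ globally'', and instead all of the hard geometry must be confined to neighborhoods of the immersed disks $\delta_i$, where the ambient group is governed by $\hat J$ rather than by $L$.

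First I would perform the infection at the level of $4$-manifolds. The curves $\eta_1,\dots,\eta_r$ lie on $\partial W = M_L := S^3\setminus\nu(L)$, and the multi-infection of $L$ by $J$ replaces $\sqcup_i\nu(\eta_i)$ by the exterior $E(\hat J) := S^3\setminus\nu(\hat J)$ via the usual meridian--longitude interchange, producing the exterior $M_{L_J}$ of $L_J$. Choosing a $4$-dimensional cobordism $C$ from $M_L$ to $M_{L_J}$ that realizes this move and setting $W_J := W\cup_{M_L}C$ gives a $4$-manifold with $\partial W_J = M_{L_J}$. Because each $\eta_i$ is nullhomotopic in $W$ and, as $c\ge 1$, the length-$2$ invariants of $\hat J$ (its pairwise linking numbers) vanish, one checks without difficulty that $H_1(W_J)\cong\Z^m$ with the meridians of $L_J$ as a basis. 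That $W_J$ additionally has the homology of a slice-disk exterior --- first integrally, so $H_2(W_J)=0$, and then with $\Z[\pi_1]$-coefficients --- is more subtle: the second homology introduced by $E(\hat J)$ must be killed by the disks $\delta_i$, and since these are only immersed this is already tied to the vanishing of the \emph{higher} Milnor invariants of $\hat J$.

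The real content is a localized disk-embedding problem. Outside tubular neighborhoods $N_i$ of the $\delta_i$ nothing has changed and the corresponding strands of $L_J$ still bound their portions of $\Delta$; every obstruction to $W_J$ being manifestly a disk exterior is concentrated in $\bigcup_i N_i$, where the $c$ self-intersections of $\bigcup_i\delta_i$ keep the infected strands from bounding embedded disks. I would set this up, inside a modification of $\bigcup_i N_i$, as: framed immersed disks bounded by the infected strands, together with a geometrically dual family (whose construction is itself part of the task), to be replaced rel boundary by disjoint locally flat embeddings. The key point is that the fundamental group governing this problem is a nilpotent quotient of the link group of $\hat J$ --- the nilpotence being exactly what the vanishing of $\bar\mu_{\hat J}$ through length $2c$ yields, via Milnor's identification of his invariants with the lower central series quotients of the link group together with the associated grope / Whitney-tower refinements. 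The appearance of $2c$ rather than $c$ is the familiar bookkeeping: removing each double point costs a Whitney move whose Whitney disk is itself only immersed, so iterating through the $c$ double points (equivalently, threading them through a grope of class $\sim 2c$ that $\hat J$ bounds in $D^4$, as the vanishing of $\bar\mu_{\hat J}$ guarantees) pushes the obstruction out to length $2c$. As nilpotent groups are good, Freedman's disk embedding theorem now supplies the embedded disks.

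Reassembling, gluing the tubular neighborhoods of these disks back into $W_J$ produces a homeomorphism $W_J\cong B^4\setminus\nu(D)$ for a system $D$ of disjoint locally flat disks with $\partial D = L_J$, which is the assertion that $L_J$ is topologically slice. I expect the third step to be the main obstacle: arranging the localized problem so that its fundamental group is \emph{visibly} the nilpotent quotient of the link group of $\hat J$, constructing the geometric duals that Freedman's theorem requires, and making the ``$c$ double points versus length $2c$'' count rigorous; once the geometry is organized correctly the topological input is off the shelf. Finally, the case $c=0$ --- where the $\eta_i$ bound disjoint embedded disks $\delta_i$ in $W$ and no condition is imposed on $J$ --- is best dispatched by hand: tying the slice disks $\Delta$ into $J$ along the embedded $\delta_i$ produces (even smooth) slice disks for $L_J$ directly, with no Freedman input needed.
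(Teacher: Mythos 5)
This statement is not proved in the paper at all --- it is quoted from Cochran--Friedl--Teichner --- so the relevant comparison is with the paper's proof of the analogous Theorem \ref{Thm:Slice} (and CFT's original argument, which has the same architecture). Your overall frame (glue data coming from $\hat J$ onto $W$, let the Milnor invariants of $\hat J$ feed Freedman machinery) is the right one, but the step you yourself flag as the main obstacle is a genuine gap, and it is resolved in the literature by a different mechanism than the one you propose. You claim the localized embedding problem has fundamental group a nilpotent quotient of the link group of $\hat J$, hence a good group, so the disk embedding theorem applies. Nothing in the setup makes that group nilpotent: no hypothesis is placed on $\pi_1(W)$, a neighborhood of the immersed data carries a quotient of the (non-nilpotent) link group of $\hat J$, and vanishing of $\bar\mu$ through length $2c$ does not nilpotentize anything --- it is used through an entirely different statement, namely Cochran's Corollary 9.4 / Freedman--Teichner's Lemma 2.7 (Theorem \ref{Thm:MilnorImmersedDisks} here): a link of $2c$ parallel copies of components of $\hat J$ bounds \emph{disjoint immersed} disks in $B^4$. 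Concretely, one caps the $\delta_i$ with the trivial disks in the trace $X$ of $\hat J$, tubes each of the $c$ (self-)intersections into the gluing region (in CFT's count each intersection contributes two parallel copies, whence $2c$; in Theorem \ref{Thm:Slice} the count is $c$ plus $r$ dual spheres built from the 2-handle cores, whence $c+r$), and caps the resulting parallel copies with those disjoint immersed disks. This produces $\pi_1$-null immersed spheres, already equipped with geometrically transverse duals, with vanishing $\lambda$ and $\mu$; then Freedman--Quinn Theorem 6.1 (Theorem \ref{Thm:sCobordism}), which requires no good-group hypothesis, replaces them by embedded dual pairs, and surgery yields a manifold satisfying Proposition \ref{TopSliceCondition}. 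In your proposal both the $\pi_1$-control and the geometrically dual family (which the disk embedding theorem genuinely needs) are deferred rather than constructed, and the ``$c$ double points versus length $2c$'' count is left as grope/Whitney heuristics; that is exactly the content that has to be supplied.

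A secondary error: for the natural 4-manifold obtained by attaching the trace of $\hat J$ (or any cobordism realizing the infection) to $W$, the second homology is \emph{not} zero --- it is $\Z^{2r}$, generated by the capped-off $\delta_i$ and the capped surfaces $\hat\Sigma_i$ (Lemma \ref{Lem:nSoln}); immersed disks do not kill these classes, they represent them. So one cannot ``recognize $W_J$ as a disk exterior'' homologically; the entire point of the sphere-embedding step is to re-represent these classes by embedded spheres in transverse pairs and surger them out, after which Proposition \ref{TopSliceCondition} (note also that $\partial W$ is the zero-surgery $M_L$, not the link exterior) applies. Your closing remark about the $c=0$ case is fine.
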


\begin{remark}
Throughout, we use the term ``(self-)intersections'' to mean ``intersections and self-intersections.''
\end{remark}

\begin{remark}
We will not define Milnor $\bar\mu$-invariants in this paper; an overview from the perspective relevant here can be found in \cite{freedman4ManifoldTopologyII1995}, and we will ultimately make use of the Milnor $\bar\mu$-invariant condition via an application of Theorem \ref{Thm:MilnorImmersedDisks}, which appears as Lemma 2.7 \cite{freedman4ManifoldTopologyII1995}. The result first appeared as Corollary 9.4 in \cite{cochranDerivativesLinksMilnor1990}.
\end{remark}

Other notable constructions of topologically slice links include \cite{cochranStructureBipolarFiltration2015}, \cite{chaCoveringLinkCalculus2014a}, and \cite{chaCassonTowersSlice2016}.

It has gotten much easier in recent years to show that certain knots and links are not smoothly slice using techniques such as gauge theory (\cite{donaldsonApplicationGaugeTheory1983}, \cite{gompfSmoothConcordanceTopologically1986}, \cite{cochranApplicationsDonaldsonTheorems1988}, \cite{endoLinearIndependenceTopologically1995}, \cite{heddenSatellitesInfiniteRank2021}), Ozsv\'{a}th-Szab\'{o}'s Heegaard-Floer homology and its variants (\cite{manolescuConcordanceInvariantFloer2005}, \cite{satoTopologicallySliceKnots2018}; knot Floer: \cite{nieTopologicallySliceKnots2019}, \cite{daiEquivariantKnotsKnot2022}; involutive: \cite{daiCableFigureeightKnot2022}; for a survey see \cite{homSurveyHeegaardFloer2017}), and Khovanov homology (\cite{rasmussenKhovanovHomologySlice2004},  \cite{piccirilloConwayKnotNot2018a}, \cite{shumakovitchRasmussenInvariantSliceBennequin2018}). However, essentially the only nonclassical tools for showing that knots are not topologically slice are metabelian invariants, in particular Casson-Gordon and twisted Alexander invariants (\cite{kirkConcordanceMutation2001}, \cite{heraldMetabelianRepresentationsTwisted2008}, \cite{millerDistinguishingMutantPretzel2015}) and Cheeger-Gromov's von Neumann $\rho$-invariants (\cite{cochranDerivativesKnotsSecondorder2010})

The main theorem of this paper resembles Theorem \ref{Thm:COTThm} above, but with a different Milnor's invariant condition.

\begin{restatable*}{theorem}{ThmSlice}
\label{Thm:Slice}
Let $L \subset S^3$ be an $m$-component slice link with slice disks $\Delta=\Delta_1\sqcup\cdots\sqcup\Delta_m \subset B^4$. Let $W:=B^4\setminus\nu(\Delta)$. Let $\E$ be a proper $r$-multi-disk in $S^3$ such that $\eta_1,...,\eta_r$ are nullhomotopic in $W$ and thus bound immersed disks $\delta=\delta_1\cup\cdots\cup\delta_r$ with, say, $c$ (self-)intersections. Let $J$ be an $r$-component string link whose closure $\hat{J}$ has Milnor $\bar\mu$-invariants vanishing up to (and including) length \hl{$c+r$}. Then the multi-infection of $L$ by $J$ is topologically slice.
\end{restatable*}

There are examples which satisfy the condition in Theorem \ref{Thm:Slice} but not the conditions in Theorem \ref{Thm:COTThm}. In particular, any time the $\eta_i$ are freely nullhomotopic in $W$, but require at least $r+1$ (self-)intersections in the immersed disks $\delta_i$, a string link $J$ whose closure $\hat{J}$ has Milnor $\bar\mu$-invariants vanishing up to length \hl{$c+r$} but not up to link \hl{$2c$} would suffice since in this case $r<c$. We will describe an example in which this occurs in Section \ref{Sec:Ex}.

In practice we typically have $r \leq c$, but Theorem \ref{Thm:Slice} is not technically speaking stronger than Theorem \ref{Thm:COTThm} since some of the $\delta_i$ might be embedded and disjoint from the other $\delta_i$, in which case we might have $r>c$. However, cases like this do not require the full difficulty of either theorem, and we could apply more elementary techniques to any of the $\eta_i$ which bound such a $\delta_i$ in order to reduce any such case to one where $r \leq c$. How this can be done in the context of Theorem \ref{Thm:Slice} is described in Remark \ref{Rmk:Degenerate} after the proof of Theorem \ref{Thm:Slice} at the end of Section \ref{Sec:Sliceness}.

We can also refine Theorem \ref{Thm:Slice} slightly so that we don't need {\it all} the Milnor's $\bar\mu$-invariants up to length \hl{$c+r$} to vanish, though it is difficult to state the exact condition precisely. Theorem \ref{Thm:COTThm} could also be refined in the same way. We will discuss the modification in Remark \ref{Rmk:MilnorRefinement} after the proof of Theorem \ref{Thm:Slice} and Remark \ref{Rmk:Degenerate} at the end of Section \ref{Sec:Sliceness}.

Along the way, we will also prove the following theorem:

\begin{restatable*}{theorem}{ThmnSolv}
\label{Thm:nSolv}
Let $L \subset S^3$ be an $m$-component smoothly slice link with (smooth) slice disks $\Delta=\Delta_1\sqcup\cdots\sqcup\Delta_m \subset B^4$. Let $W:=B^4\setminus\nu(\Delta)$. Let $\E$ be a proper $r$-multi-disk in $S^3$ such that $\eta_1,...,\eta_r \in \pi_1(W)^{(n)}$. Let $J$ be an $r$-component string link whose closure $\hat{J}$ has pairwise vanishing linking numbers. Then the multi-infection $P=P(J)$ of $L$ by $J$ along $\E$ is $(n-0.5)$-solvable.
\end{restatable*}


We will define and discuss $(h)$-solvability for $h\in\frac{1}{2}\N$ in Section \ref{Sec:Background}, but will briefly highlight here that links coming from Theorem \ref{Thm:nSolv} are candidates for links which are $(n.5)$- but not $(n+1)$-solvable for $n$ an integer. The first such links were found by Otto in \cite{ottoSolvableFiltrationLink2014}, and it is still open whether there are any knots (that is, 1-component links), or even boundary links, with this property.

Theorem \ref{Thm:nSolv} is related to the following theorem which is well-known to the experts. Very similar theorems       are Proposition 3.1 in \cite{cochranStructureClassicalKnot2004} and Theorem 7.1 in \cite{cochranKnotConcordanceHigherorder2009}, and it first appeared in the literature in this form in \cite{burkeInfectionStringLinks2014}:

\begin{theorem}[see Proposition 3.1 of \cite{cochranStructureClassicalKnot2004}, Theorem 7.1 of \cite{cochranKnotConcordanceHigherorder2009}, and Theorem 3.6 of \cite{burkeInfectionStringLinks2014}]
\label{Thm:Burke}
Let $L \subset S^3$ be an $m$-component slice link with slice disks $\Delta=\Delta_1\sqcup\cdots\sqcup\Delta_m \subset B^4$. Let $W:=B^4\setminus\nu(\Delta)$. Let $\E$ be a proper $r$-multi-disk in $S^3$ such that $\eta_1,...,\eta_r\in\pi_1(W)^{(n)}$. Let $J$ be an $r$-component string link whose closure $\hat{J}$ is $(h)$-solvable. Then the multi-infection $P$ of $L$ by $J$ along $\E$ is $(n+h)$-solvable.
\end{theorem}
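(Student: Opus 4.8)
The plan is to verify $(n+h)$-solvability of $P$ directly from the definition. Recall that $P$ being $(n+h)$-solvable means that the zero-framed surgery manifold $M_P := S^3_0(P)$ bounds a compact oriented $4$-manifold $Z$ with $H_1(M_P) \xrightarrow{\cong} H_1(Z)$, with $H_2(Z)$ free of rank $2g$ carrying a hyperbolic basis $\ell_1,\dots,\ell_g,d_1,\dots,d_g$ of surfaces, and with the $\pi_1$-image of each $\ell_j$ contained in $\pi_1(Z)^{(\lceil n+h\rceil)}$ and of each $d_j$ in $\pi_1(Z)^{(\lfloor n+h\rfloor)}$. I would construct $Z$ by splicing the slice-disk exterior $W$ of $L$ --- which is itself an ``$(\infty)$-solution'': $\partial W = M_L$, $H_1(W) \cong \Z^m$, and, crucially, $H_2(W) = 0$ --- to an $(h)$-solution $V$ of $M_{\hat J} := S^3_0(\hat J)$, glued along neighborhoods of the infection curves.

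Concretely, I would invoke the ``infection cobordism'' underlying the cited results. Since the multi-infection along $\E$ is carried out by excising neighborhoods of the $\eta_i$ from $S^3$ and regluing the string-link exterior of $J$, one obtains $M_P$ from $M_L$ by the corresponding cut-and-paste along tori dual to the $\eta_i$; pushing the $\eta_i$ into the interior of $W$ (framed as dictated by $\E$) and gluing $W$ to $V$ along these curves, matched with the meridians $\mu_1,\dots,\mu_r$ of $\hat J$ used in building $V$, yields a $4$-manifold $Z$ with $\partial Z = M_P$. A Mayer--Vietoris computation then supplies the homological half of $(n+h)$-solvability: since $\eta_i \in \pi_1(W)^{(n)} \subseteq \pi_1(W)^{(1)}$ (taking $n \ge 1$; the case $n=0$ is analogous and standard), each class $[\eta_i]$ vanishes in $H_1(W)$, which forces $H_1(M_P) \xrightarrow{\cong} H_1(Z)$; and since $H_2(W) = 0$ the same computation identifies $H_2(Z)$ with $H_2(V) \cong \Z^{2g}$, carrying over its hyperbolic basis $\{\ell_j,d_j\}$ (the intersection form and the basis property being unaffected by the splicing).

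The substantive point --- and the step I expect to be the main obstacle --- is the derived-series bookkeeping: inside $V$ the surface $\ell_j$ is only guaranteed $\pi_1(\ell_j) \subseteq \pi_1(V)^{(\lceil h\rceil)}$, and this must be upgraded to $\pi_1(\ell_j) \subseteq \pi_1(Z)^{(\lceil n+h\rceil)}$ in $Z$ (and likewise the $d_j$ at levels $\lfloor h\rfloor$, $\lfloor n+h\rfloor$). For this it is enough to prove that the inclusion $j \colon V \hookrightarrow Z$ satisfies $j_*(\pi_1(V)) \subseteq \pi_1(Z)^{(n)}$, since then, using $A^{(s)} \subseteq G^{(s)}$ for any subgroup $A \le G$, we get $j_*(\pi_1(V)^{(s)}) \subseteq (j_*\pi_1(V))^{(s)} \subseteq (\pi_1(Z)^{(n)})^{(s)} = \pi_1(Z)^{(n+s)}$ for every $s \ge 0$, applied at $s = \lceil h \rceil$ and $s = \lfloor h \rfloor$. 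To prove $j_*(\pi_1(V)) \subseteq \pi_1(Z)^{(n)}$, consider the composite $\phi \colon \pi_1(V) \xrightarrow{j_*} \pi_1(Z) \to \pi_1(Z)/\pi_1(Z)^{(n)}$; its target is solvable. Under the gluing each meridian $\mu_i$ of $\hat J$ is identified, up to conjugacy and inversion, with $\eta_i \in \pi_1(W)^{(n)}$, whose image in $\pi_1(Z)$ lies in the normal subgroup $\pi_1(Z)^{(n)}$; hence $\phi(\mu_i) = 1$, so $\phi$ factors through $\pi_1(V)/\langle\langle \mu_1,\dots,\mu_r\rangle\rangle$. But the $\mu_i$ generate $H_1(M_{\hat J})$, hence (via the isomorphism $H_1(M_{\hat J}) \xrightarrow{\cong} H_1(V)$) generate $H_1(V)$, so $\pi_1(V)/\langle\langle \mu_i\rangle\rangle$ has trivial abelianization, i.e.\ is perfect; a perfect group has trivial image in any solvable group, so $\phi \equiv 1$ and $j_*(\pi_1(V)) \subseteq \pi_1(Z)^{(n)}$, as required. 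This exhibits $\{\ell_j,d_j\}$ as an $(n+h)$-solution structure on $Z$. The remaining care is the bookkeeping of framings and orientations in the cut-and-paste, so that $\partial Z$ is genuinely $M_P$ and the $\mu_i$ meet the $\eta_i$ as asserted; this is exactly what the ``standard'' construction of \cite{cochranStructureClassicalKnot2004}, \cite{cochranKnotConcordanceHigherorder2009}, and \cite{burkeInfectionStringLinks2014} handles, and I would cite it rather than reprove it.
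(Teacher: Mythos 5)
The paper states Theorem \ref{Thm:Burke} as a quoted result from the literature and gives no proof of it, so there is no in-paper argument to compare against; your write-up is essentially the standard proof from the cited sources, and I find no gap in it. It is also the exact analogue of what the paper does in Section \ref{Sec:nSolvability} for Theorem \ref{Thm:nSolv}: there the $(h)$-solution $V$ of $M_{\hat J}$ is replaced by the handle-attachment trace $X$ of $\hat{J}$, which bounds $M_{\hat J}$ but is not itself a solution ($H_1(X)=0$ rather than $\Z^r$, and $H_2(X)\cong\Z^r$ with no dual classes), which is precisely why the paper must manufacture the dual surfaces $\tilde F_i$ by hand out of commutator expressions for the $\eta_i$ and why its conclusion drops to $(n-0.5)$. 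Your two key points are both correct and are the standard devices: (i) because the meridians $\mu_i$ form a basis of $H_1(V)$, the Mayer--Vietoris connecting map out of the gluing handlebody is injective, so $H_2(Z)\cong H_2(V)$ with the hyperbolic basis carried over (contrast the rank-$2r$ answer in Lemma \ref{Lem:nSoln}, where $H_1(X)=0$ makes that map zero); and (ii) the perfect-into-solvable argument showing $j_*\pi_1(V)\subseteq\pi_1(Z)^{(n)}$, which then shifts the derived-series level of the $\ell_j$ and $d_j$ by $n$ via $(\pi_1(Z)^{(n)})^{(s)}=\pi_1(Z)^{(n+s)}$. The only things you defer --- the precise framed identification of $\eta_i$ with $\mu_i$ and the $n=0$ bookkeeping (where $[\eta_i]$ need not vanish in $H_1(W)$, but the quotient computation still yields $H_1(Z)\cong H_1(W)$ because the $[\mu_i]$ are a basis of $H_1(V)$) --- are indeed routine and handled in the references you cite.
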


In particular, it is often convenient to think of links with vanishing pairwise linking number as ``$(-0.5)$-solvable.'' Theorem \ref{Thm:nSolv} can in this way be seen as addressing the ``$h=-0.5$'' case of Theorem \ref{Thm:Burke}.

Recently, in \cite{manchesterActionMazurPattern2022}, the author showed the following related theorem:

\begin{theorem}[Theorem 1.4 of \cite{manchesterActionMazurPattern2022}]
Given homotopically related satellite operators $P_\epsilon$ for $\epsilon\in\{0,1\}$, for any knot $J$ we have that $P_0(J)\#-P_1(J)$ is (1)-solvable. In other words, $P_0 \equiv P_1$ as operators on $\cC/\cF_{(1)}$, where $\cF_{(1)}$ is the subgroup of (1)-solvable knots.
\end{theorem}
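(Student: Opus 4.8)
The plan is to construct a $(1)$-solution for $K:=P_0(J)\#-P_1(J)$ directly out of a homotopy between the two patterns, exploiting the fact that this homotopy lives inside the companion solid torus and therefore never interacts with $J$.

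First I would fix a homotopy $H\colon S^1\times[0,1]\to S^1\times D^2$ from $P_0$ to $P_1$ furnished by the hypothesis that the operators are homotopically related, made generic so that it is realized by a finite sequence of self-crossing changes along small curves $\gamma_1,\dots,\gamma_k$. Writing $P_\epsilon(J)=(S^1\times D^2)\cup_{T^2}E(J)$ and performing $H$ inside the $S^1\times D^2$ factor, $H$ extends to a homotopy $\widetilde H$ from $P_0(J)$ to $P_1(J)$ in $S^3$ whose crossing changes are still supported along $\gamma_1,\dots,\gamma_k$, all of which lie in the pattern complement $(S^1\times D^2)\setminus P_\epsilon$ and in particular are disjoint from $E(J)$.

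Next I would take the trace of $\widetilde H$ in $S^3\times[0,1]$ and pass, in the usual way, to a singular disk $D$ in $B^4$ bounded by $K$, whose double points are indexed by $\gamma_1,\dots,\gamma_k$ and whose double-point loops are represented by conjugates of these curves. The real content of the phrase ``homotopically related'' is that the $\gamma_i$ can be taken to lie in the second derived subgroup $\pi_1\big((S^1\times D^2)\setminus P_\epsilon\big)^{(2)}$; because the $\gamma_i$ miss $E(J)$, this survives under inclusion into the complement of $D$, uniformly in $J$. One then runs the standard four-dimensional argument: each double point of $D$ is resolved by ambient surgery along a surface whose boundary, being built from a $\pi_1^{(2)}$ loop, lies in $\pi_1^{(1)}$ of the relevant complement, and iterating this (a Whitney-tower / grope construction) produces an embedded disk inside a $4$-manifold meeting the conditions of a $(1)$-solution for $K$, with the surfaces carrying its second homology supported in $\pi_1^{(1)}$ and with the required vanishing of the $\Z[t^{\pm1}]$-equivariant intersection form on the Lagrangian piece; in fact this yields $(1.5)$-solvability. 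This is the same mechanism that drives Theorem~\ref{Thm:nSolv}: when the common winding number is $0$ one can even repackage the conclusion by writing $K$ as a multi-infection of the slice knot $P_0(U)\#-P_1(U)$ along the two companion axes, which now sit in $\pi_1(W)^{(2)}$, and quoting Theorem~\ref{Thm:nSolv} with $n=2$ outright; when the winding number is nonzero the companion axes are homologically essential in every slice-disk complement, so Theorem~\ref{Thm:nSolv} does not apply verbatim and one has to run the surgery argument on $D$ by hand.

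The main obstacle is precisely that last step: extracting from the exact definition of ``homotopically related'' that the double-point loops sit as deep as the second derived subgroup, and then carrying out the Whitney-tower / grope construction that trades the singular disk $D$ for a genuine $(1)$-solution, together with the bookkeeping on the equivariant intersection form. Everything preceding it — extending the homotopy across $E(J)$, forming the singular disk, and observing that the whole picture is independent of $J$ — is routine.
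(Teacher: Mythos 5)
This statement is quoted from \cite{manchesterActionMazurPattern2022}; the present paper does not prove it, so your proposal has to stand on its own, and it has a genuine gap at its core. You read ``homotopically related'' as ``the patterns are homotopic in $S^1\times D^2$'' and then assert, without justification, that the definition secretly forces the crossing/double-point loops to lie in $\pi_1\bigl((S^1\times D^2)\setminus P_\epsilon\bigr)^{(2)}$. Neither reading can be right as stated. If the hypothesis were only a free homotopy of patterns, the theorem itself would be false: take $P_0$ the core of the solid torus and $P_1$ the core with a local trefoil tied in; these are freely homotopic (same winding number), yet $P_0(J)\#-P_1(J)$ is concordant to the inverse of the trefoil, which has nonvanishing Arf invariant and hence is not even $(0)$-solvable. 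So whatever ``homotopically related'' means in the cited paper, it carries strictly more content than homotopy of patterns (for instance, some control on the underlying knots $P_\epsilon(U)$), and identifying and exploiting that extra content is precisely the part you defer as ``the main obstacle.'' A generic homotopy gives crossing curves with at best homological control, not membership in a second derived subgroup, so the claimed depth of the $\gamma_i$ does not come for free; without it, the grope/Whitney-tower step you gesture at has nothing to run on. The fact that your sketch would ``in fact yield $(1.5)$-solvability'' should itself have been a warning: the paper explicitly notes that upgrading the conclusion beyond $\cF_{(1)}$ is open, so an argument that gets $(1.5)$ for free is almost certainly using an assumption that is not there.

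The winding-number-zero shortcut is also incorrect as written: you call $P_0(U)\#-P_1(U)$ a slice knot, but homotopic patterns need not have isotopic, or even concordant, underlying knots (the local-trefoil example again), so Theorem \ref{Thm:nSolv} cannot simply be quoted; and even when $P_0(U)\#-P_1(U)$ is slice, the assertion that the two companion axes lie in $\pi_1(W)^{(2)}$ of a slice-disk complement is unsupported --- winding number zero only makes the axes null-homologous in the knot complement, i.e.\ puts them in the first derived subgroup. Finally, the identification of the double-point loops of the traced annulus with conjugates of the crossing circles $\gamma_i$ is loose, and since your entire argument turns on where these loops sit in the derived series, that bookkeeping cannot be waved through. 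In short: extending the homotopy across $E(J)$ and producing a singular disk is routine, as you say, but everything that actually proves $(1)$-solvability --- the correct reading of the hypothesis and the construction of a $(1)$-solution from it --- is missing from the proposal.
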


This theorem is an approximate relative version of the 1-component case of Theorems \ref{Thm:COTThm} and \ref{Thm:Slice}. There are obstructions to (1.5)-solvability (and in particular sliceness) coming from Casson-Gordon invariants and metabelian $\rho$-invariants, which fail to obstruct this result when $\cF_(1)$ is replaced by some higher term in the Cochran-Orr-Teichner solvable filtration (see Propositions 5.3 and 6.3 in \cite{manchesterActionMazurPattern2022}), so it is possible that the conclusion of this result can be upgraded.

\begin{acks}
I would like to thank Shelly Harvey and Mark Powell for their useful conversations and suggestions.
\end{acks}

\section{Background}
\label{Sec:Background}

	Two ordered oriented links $L_0$ and $L_1$ are {\it (smoothly/topologically) concordant} if there is a disjoint union of annuli (smoothly/locally-flatly) embedded in $S^3 \times I$, each of which is cobounded by corresponding components of $L_0\times\{0\}$ and $-L_1\times\{1\}$. Let $\cC^m_\ttop$ (resp. $\cC^m_\sm$) denote the set of links up to topological (resp. smooth)  concordance.

A(n) (ordered, oriented) link $L$ in $S^3$ is called {\it (smoothly/topologically) slice} if it is the boundary of a (smoothly/locally-flatly) embedded disjoint union of disks in $B^4$, or equivalently if it is (smoothly/topologically) concordant to the unlink. (The topological case requires highly nontrivial topological 4-manifold theory, in particular the existence of normal bundles for topological submanifolds. See Section 9.3 of \cite{freedmanTopology4manifolds1990} and Section 21.4.8 of \cite{behrensDiskEmbeddingTheorem2021}.)
	
We will need the following well-known characterization of topological sliceness:

\begin{prop}[see e.g. Prop 2.1 of \cite{cochranNewConstructionsSlice2007} for a proof]
\label{TopSliceCondition}
An $m$-component link $L \subset S^3$ is topologically slice if and only if its 0-surgery $M_L$ bounds a topological 4-manifold $W$ such that
	\begin{enumerate}
	\item $\pi_1(W)$ is normally generated by the meridians $\mu_i$ of $L$,
	\item $H_1(W)\cong\Z^m$ is generated by $\mu_i$ (in other words the inclusion map $H_1(M_L) \to H_1(W)$ is an isomorphism), and
	\item $H_2(W)=0$.
	\end{enumerate}
\end{prop}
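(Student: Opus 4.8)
The plan is to prove the two implications separately. For the ``only if'' direction, suppose $L$ is topologically slice with slice disks $\Delta = \Delta_1 \sqcup \cdots \sqcup \Delta_m \subset B^4$ and take $W := B^4 \setminus \nu(\Delta)$ --- this is the same $W$ as in Theorems \ref{Thm:COTThm} and \ref{Thm:Slice}. First I would check that $\partial W = M_L$: the boundary is $(S^3 \setminus \nu(L))$ glued along $\partial\nu(L)$ to $\sqcup_i(\Delta_i \times \partial D^2)$, and since a properly embedded disk in $B^4$ induces the $0$-framing on its boundary (its normal Euler number vanishes, as one sees by doubling), this gluing is exactly the $0$-surgery. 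Writing $B^4 = W \cup \nu(\Delta)$ with each $\nu(\Delta_i) \cong D^2\times D^2$ simply connected and $W \cap \nu(\Delta_i) \simeq S^1$ carrying the meridian $\mu_i$, van Kampen gives that $\pi_1(W)$ is normally generated by $\mu_1, \dots, \mu_m$, while Mayer--Vietoris gives $H_1(W) \cong \Z^m$ generated by the $\mu_i$ and $H_2(W) = 0$; since sliceness forces all pairwise linking numbers of $L$ to vanish, $H_1(M_L) \cong \Z^m$ is likewise generated by meridians, so $H_1(M_L) \to H_1(W)$ is an isomorphism, giving (1)--(3).

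The substantive direction is the converse, so suppose $M_L = \partial W$ with properties (1)--(3). The zero-surgery description of $M_L$ supplies $m$ disjoint surgery solid tori $T_i \subset M_L$ whose cores $\tilde L_i$ are freely homotopic in $M_L$ to the meridians $\mu_i$. I would form $\hat W$ by attaching a $4$-dimensional $2$-handle $D^2\times D^2$ to $W$ along each $T_i$ (i.e.\ along $\tilde L_i$ with the framing for which the attaching region $D^2\times S^1 \subset \partial(D^2\times D^2)$ is identified with $T_i$). These attachments reverse the original surgeries, so $\partial\hat W \cong S^3$, and in it $L$ reappears as the union of the cores of the dual solid tori $S^1\times D^2 \subset \partial(D^2\times D^2)$, which bound the standard disks $\Delta_i := D^2\times\{0\} \subset D^2\times D^2$; these are disjoint, properly embedded, and locally flat in $\hat W$. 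It therefore suffices to prove $\hat W \cong B^4$, for then the $\Delta_i$ are topological slice disks for $L$.

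To see this, note first that the $2$-handles are attached along curves homotopic to meridians, so property (1) gives $\pi_1(\hat W) = \pi_1(W)/\langle\langle\mu_1,\dots,\mu_m\rangle\rangle = 1$. A Mayer--Vietoris computation for $\hat W = W \cup (\sqcup_i D^2\times D^2)$, with intersection $\sqcup_i T_i \simeq \sqcup_i S^1$, shows that $\Z^m \cong H_1(\sqcup_i S^1) \to H_1(W) \cong \Z^m$ carries $[\tilde L_i] = [\mu_i]$ to generators (using property (2)) and is thus an isomorphism; with property (3) this forces $H_1(\hat W) = H_2(\hat W) = 0$, and Poincar\'e--Lefschetz duality then gives $H_3(\hat W) = H_4(\hat W) = 0$, so $\hat W$ is a compact, simply connected, acyclic topological $4$-manifold with boundary $S^3$. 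The main input now is Freedman's solution of the topological $4$-dimensional Poincar\'e conjecture: $\hat W \cup_{S^3} B^4$ is a homotopy $4$-sphere, hence homeomorphic to $S^4$, and by the topological Schoenflies theorem the complementary region $\hat W$ is homeomorphic to $B^4$. I expect this appeal to Freedman's theorem to be the only real difficulty; the remainder is bookkeeping with the surgery description and standard algebraic topology.
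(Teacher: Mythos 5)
Your argument is correct and is essentially the standard one: the paper does not prove Proposition \ref{TopSliceCondition} itself but cites Proposition 2.1 of \cite{cochranNewConstructionsSlice2007}, whose proof runs exactly as yours does --- slice disk exterior for the forward direction, and for the converse attach $2$-handles to $W$ along the surgery solid tori to undo the $0$-surgeries, check that the result is a contractible manifold with boundary $S^3$, apply Freedman to identify it with $B^4$, and take the cocores as slice disks. The one point to tighten is the framing of the $2$-handles: an identification of the attaching region with $T_i$ is only well defined up to twists along the meridian of $T_i$ (which is the longitude $\lambda_i$), and an arbitrary choice would yield $1/k$-surgeries on the components rather than $S^3$, so you should specify the framing for which the attachment literally reglues $\nu(L_i)$, i.e.\ for which the new boundary solid torus is glued meridian-to-meridian.
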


It is often difficult to check whether $M_L$ bounds such a 4-manifold, but sometimes it is easier to show that $M_L$ bounds a 4-manifold satisfying the following weaker condition, due to Cochran-Orr-Teichner in \cite{cochranKnotConcordanceWhitney2003}. Recall that the derived series of a group $G$ is defined inductively as $G^{(0)}:=G$ and $G^{(n+1)}:=[G^{(n)}:G^{(n)}]$, that is, each term is the commutator subgroup of the previous term.

\begin{definition}
\label{Solvability}
An $m$-component link $L$ is called \emph{$(n)$-solvable} for $n\in\N$ if $M_L$ bounds a smooth 4-manifold $W$ such that
\begin{enumerate}
	\item the inclusion map $H_1(M_L) \to H_1(W)$ is an isomorphism,
	\item $H_2(W)$ is freely generated by smoothly embedded surfaces $\{S_i, T_i\}$ with trivial normal bundle such that $S_i$ and $T_i$ intersect once transversely, and otherwise the surfaces are disjoint, and
	\item the inclusion maps $\pi_1(S_i),\pi(T_i)\to\pi(W)$ land in $\pi_1(W)^{(n)}$.
\end{enumerate}
If, additionally, the inclusion maps $\pi_1(S_i)\to\pi_1(W)$ land in $\pi_1(W)^{(n+1)}$, then $L$ is said to be \it{$(n.5)$-solvable}. If $L$ is $(h)$-solvable for some $h\in\frac{1}{2}\N$, the manifold $W$ is called an \emph{$(h)$-solution} for $L$.
\end{definition}

\begin{remark}
Throughout, we will use $n$ to denote an element of $\N$, and $h$ to denote an element of $\frac{1}{2}\N$.
\end{remark}

Notice that smooth concordance preserves $(h)$-solvability since if $L_0$ and $L_1$ are smoothly concordant and $L_1$ is $(h)$-solvable, we can construct an $(h)$-solution for $L_0$ in the following way: Do 0-surgery $\times I$ along a concordance from $L_0$ to $L_1$, and glue an $(h)$-solution for $L_1$ to the $M_{L_1}$ boundary component. By a Seifert-van Kampen and Mayer-Vietoris argument, this gives an $(h)$-solution for $L_0$ with the surfaces representing generators for the second homology simply given by the images of the surfaces representing the second homology in the $(h)$-solution for $L_1$.

\begin{remark}
We could have also defined a notion of topological $(h)$-solvability as well. However, when talking about $(h)$-solvability, we will always mean ({\it smooth}) $(h)$-solvability as we have defined it and use smooth versions of concordance/sliceness/etc.
\end{remark}

\begin{definition}
The set of $m$-component $(h)$-solvable links is denoted $\cF^m_{(h)}$. Also define $\cF^m_{(-0.5)}$ to be the set of $m$-component links, all of whose pairwise linking numbers vanish. As mentioned above, it is often convenient to regard such links as ``$(-0.5)$-solvable.''
\end{definition}

This defines a filtration of $\cC^m_\sm$:
	$$\cdots\subseteq\cF^m_{(1.5)}\subseteq\cF^m_{(1)}\subseteq\cF^m_{(0.5)}\subseteq\cF^m_{(0)}\subseteq\cF^m_{(-0.5)}\subseteq\cC^m_\sm$$
This filtration was first defined by Cochran-Orr-Teichner in \cite{cochranKnotConcordanceWhitney2003}. Note that if the surfaces $S_i,T_i$ were spheres, condition (3) of Definition \ref{Solvability} would be trivially satisfied for all $h$. In this case, we could do surgery on one sphere from each pair, replacing a tubular neighborhood $S^2 \times D^2$ with $B^3 \times S^1$ (which have the same boundary), in order to get a manifold which satisfies the conditions of Prop \ref{TopSliceCondition}.

\begin{remark}
When $m=1$, we will drop the superscript and write $\cC_\sm$, $\cC_\top$, and $\cF_{(h)}$.
\end{remark}

All topologically slice knots are $(h)$-solvable for every $h\in\frac{1}{2}\N$ as noted in Section 1 of \cite{cochranKnotConcordanceHigherorder2009}, and it is conjectured that $\bigcap_{h\in\frac{1}{2}\N}\cF_{(h)}$ contains precisely the topologically slice knots. It is known that for all $n\in\N$, the quotient $\cF_{(n)}/\cF_{(n.5)}$ has infinite rank (see \cite{cochranKnotConcordanceHigherorder2009}). However, it is unknown whether $\cF_{(n.5)}/\cF_{(n+1)}$ is even nontrivial for any $n$. In \cite{ottoSolvableFiltrationLink2014}, Otto showed that $\cF_{(n.5)}^{m}/\cF_{(n+1)}^{m}$ contains an infinite cyclic subgroup when $m \geq 3 \cdot 2^{n+1}$.

At points, we will also need the {\it trace} of a link:

\begin{definition}
The {\it trace} $X$ of a link $\Gamma \subset S^3$ is the 4-manifold obtained by attaching 0-framed 2-handles to $B^4$ along $\Gamma$.
\end{definition}

\section{Sufficient conditions for $(n-0.5)$-solvability}
\label{Sec:nSolvability}

In this section we will prove the following theorem:

\ThmnSolv

\begin{remark}
\label{Rmk:WhiskerIndependence}
In order for it to be sensible to talk about curves as fundamental group elements, we have to choose a basepoint and paths from the basepoint to a point on the curve. In the context of basepoints and fundamental groups, such paths are called {\it whiskers}. Note that for any group $G$, the terms $G^{(n)}$ of the derived series are normal in $G$ and so whether a curve belongs to $\pi_1(W)^{(n)}$ does not depend on the choice of basepoint or on the choice of whisker from the basepoint to the curve since this will only change the element of the fundamental group by a conjugation. So, we will often talk about (unbased) curves lying in $\pi_1(W)^{(n)}$.
\end{remark}

\noindent Before proving Theorem \ref{Thm:nSolv}, we will fix the following notation for Sections \ref{Sec:nSolvability} and \ref{Sec:Sliceness}.

Recall that having all pairwise linking numbers 0 is equivalent to the components of $\hat{J}$ bounding disjoint embedded surfaces in $B^4$. Let $X$ be the trace of $\hat{J}$, let $\mu_i$ be fixed meridians of $\hat{J}$ in $M_{\hat{J}} = \partial X$ and $\F$ the natural multi-disk containing them (see Fig.\ \ref{Fig:StringLinkClosure}), let $\Sigma_i$ be disjoint embedded surfaces in $B^4$ bounded by the components of $J$, and $\hat\Sigma_i$ these surfaces capped off by the cores of the 2-handles in $X$.

\begin{figure}
	\includegraphics{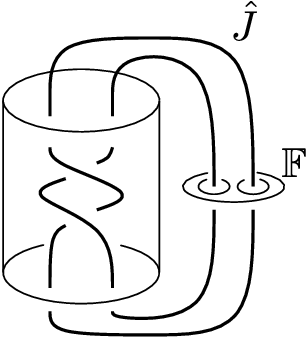}
	\caption{Closure $\hat{J}$ of a string link $J$, with multi-disk $\F$ pictured.}
	\label{Fig:StringLinkClosure}
\end{figure}

Notice that thickening the multi-disks $\E$ and $\F$ with subdisks deleted give handlebodies $\bbH \subset M_L = \partial W$ and $\I \subset M_J = \partial X$, respectively. Note that we can view $\I$ as $S^3 \setminus ((D^2 \times I) \cup \nu\hat{J})$. Let $N$ be the manifold obtained by gluing $X$ to $W$ via identifying $\bbH$ and $\I$ such that the natural meridians and longitudes of each are identified (see Fig.\ \ref{Fig:Solution} for a schematic picture). This gluing ``buries'' the two handlebodies that get identified inside the interior of the new 4-manifold. Moreover, the boundary of the new manifold is $M_P$.

\begin{figure}
	\includegraphics{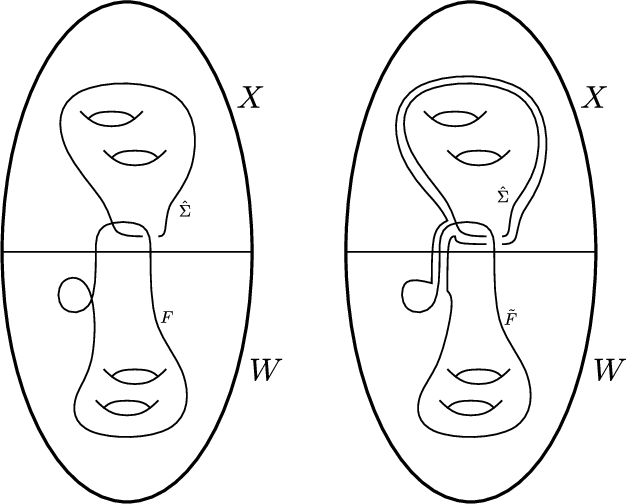}
	\caption{$(n-0.5)$-solution for $P(J)$}
	\label{Fig:Solution}
\end{figure}

\begin{proof}[Proof of Theorem \ref{Thm:nSolv}]
First assume $n \geq 1$. We will construct immersed surfaces in $W$ bounded by the $\eta_i$. The constructions for different $i$ are independent of one another (though the surfaces might intersect), so for each one, choose a basepoint for $W$ on $\eta_i$. Since $\eta_i\in\pi_1(W)^{(n)}$, it can be written as $\prod_k[\alpha_k,\beta_k]$ where $\alpha_k,\beta_k\in\pi_1(W)^{(n-1)}$. Choosing loops representing the $\alpha_k$ and $\beta_k$ gives a map of an annulus coming from a homotopy from $\eta_i$ to the product of the commutators of these representative loops, which we can turn into a map of a surface by doing appropriate identifications on the boundary. After possibly perturbing this map to be in general position, its image is an immersed surface $F_i$ in $W$ with boundary $\eta_i$. Cap off this surface in $X$ using the trivial disk bounded by the unknots $\mu_i$ in $S^3 \subset B^4 \subset X$ (recall that the gluing of $X$ and $W$ identifies $\eta_i$ and $\mu_i$). Notice that $F_i$ and $\hat\Sigma_i$ intersect geometrically once.

The surfaces $\hat\Sigma_i$ are trivially framed since they come from properly embedded surfaces in $B^4$ capped off by core disks of 0-framed 2-handles. And we can arrange that the $F_i$ also be trivially framed by adding local kinks.

Note that the inclusion map $\pi_1(\hat\Sigma)\to\pi_1(X)=1$ is the trivial map, and so in particular lands in every term of the derived series of $\pi_1(X)$, and thus also of $\pi_1(N)$.

We can tube the (self-)intersections among the $F_i$ onto parallel copies of the $\hat\Sigma_i$ to produce embedded surfaces $\tilde{F_i}$ (see Fig.\ \ref{Fig:Solution}).  There is a generating set for $\pi_1(\tilde{F}_i)$ consisting of a symplectic basis for $\pi_1(F_i)$ together with some number of copies of the symplectic bases for $\pi_1(\hat{\Sigma}_i)$, modified by whiskers to the basepoint, which we will now choose to be the intersection point $F_i\cap\hat{\Sigma}_i$ since it is the unique point common to $\hat{\Sigma_i}$, $F_i$, and $\tilde{F_i}$. From the construction of the $F_i$ and Remark \ref{Rmk:WhiskerIndependence}, the basis for $\pi_1(F_i)$ lies in $\pi_1(N)^{(n-1)}$. Since the inclusion map $\pi_1(\hat\Sigma)\to\pi_1(X)=1$ is the trivial map, the basis elements of $\pi_1(\tilde{F}_i)$ coming from the basis for $\pi_1(\hat\Sigma_i)$ are freely nullhomotopic and thus nullhomotopic. Therefore, $\pi_1(\tilde{F})\to\pi_1(N)$ still lands in $\pi_1(N)^{(n-1)}$.

Thus, $N$ is an $(n-0.5)$-solution for $P$, giving Theorem \ref{Thm:Slice} when $n \geq 1$, modulo the following lemma:

\begin{lemma}
\label{Lem:nSoln}
\begin{enumerate}
	\item $\pi_1(N)$ is normally generated by the meridians of $P=P(J)$
	\item $H_1(N)\cong\Z^m$ and is generated by the meridians of $P$.
	\item $H_2(N)\cong\Z^{2r}$ and is generated by the $\tilde{F_i}$ and $\hat\Sigma_i$.
\end{enumerate}
\end{lemma}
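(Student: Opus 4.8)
The plan is to apply Seifert--van Kampen and Mayer--Vietoris to the decomposition $N = W \cup_{\bbH} X$, whose overlap is the buried handlebody $\bbH \cong \I$ (thicken it to an open collar so that both pieces are open). First I would record the relevant invariants of the three pieces. Since $W$ is a slice-disk exterior for $L$, the group $\pi_1(W)$ is normally generated by the meridians $\mu_i$ of $L$, $H_1(W) \cong \Z^m$ is generated by the $\mu_i$, and $H_2(W) = 0$ --- these are the conditions of Proposition~\ref{TopSliceCondition}, which the slice-disk exterior satisfies. Since $X$ is the trace of $\hat J$, obtained by attaching $r$ zero-framed $2$-handles to $B^4$, we have $\pi_1(X) = 1$ (as already noted), $H_1(X) = 0$, and $H_2(X) \cong \Z^r$ with basis the classes $[\hat\Sigma_i]$. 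Finally $\bbH$, a thickening of $\E$ with its $r$ subdisks removed, is a genus-$r$ handlebody, so $\pi_1(\bbH)$ is free on $\eta_1,\dots,\eta_r$, $H_1(\bbH) \cong \Z^r$ is generated by the $\eta_i$, and $H_2(\bbH) = 0$. I keep the standing hypothesis $n \ge 1$ from the proof of Theorem~\ref{Thm:nSolv}; its only role here is that $\eta_i \in \pi_1(W)^{(n)} \subseteq \pi_1(W)^{(1)}$ forces the class $[\eta_i]_W$ of $\eta_i$ in $H_1(W)$ to vanish.

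For part (1), Seifert--van Kampen gives $\pi_1(N) \cong \pi_1(W) *_{\pi_1(\bbH)} \pi_1(X) = \pi_1(W)/\langle\langle \eta_1,\dots,\eta_r\rangle\rangle$, a quotient of $\pi_1(W)$. A quotient of a group normally generated by the $\mu_i$ is normally generated by their images, and the $\mu_i$, chosen in $M_L$ away from $\E$, are unaffected by the infection, so they descend to meridians of the corresponding components of $P$ in $M_P = \partial N$. For parts (2) and (3) I would run the Mayer--Vietoris sequence of $N = W \cup_{\bbH} X$. Using $H_2(\bbH) = 0$, $H_2(W) = 0$, $H_1(X) = 0$, and connectedness of the pieces, it shows that everything is controlled by the map $f\colon H_1(\bbH) = \Z^r \to H_1(W) \oplus H_1(X) = \Z^m$, $[\eta_i] \mapsto ([\eta_i]_W, 0)$: its cokernel is $H_1(N)$, and its kernel sits in a short exact sequence $0 \to H_2(X) \to H_2(N) \to \ker f \to 0$. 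Under the standing hypothesis $[\eta_i]_W = 0$, so $f = 0$; hence $H_1(N) \cong H_1(W) \cong \Z^m$ generated by the $\mu_i$, i.e.\ by the meridians of $P$, which is (2).

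For (3), $f = 0$ makes the connecting homomorphism $\partial\colon H_2(N) \to H_1(\bbH)$ surjective, and (using $H_2(\bbH) = H_2(W) = 0$) we get the short exact sequence
\[
0 \longrightarrow H_2(X) \longrightarrow H_2(N) \xrightarrow{\ \partial\ } H_1(\bbH) \longrightarrow 0 ,
\]
in which $H_2(X) \cong \Z^r$ is identified with $\langle [\hat\Sigma_i]\rangle \subseteq H_2(N)$ and $H_1(\bbH) \cong \Z^r$. As the quotient is free this splits, giving $H_2(N) \cong \Z^{2r}$, and it remains to see that the $[\tilde F_i]$ map to a basis of $H_1(\bbH)$ under $\partial$. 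Recall $\tilde F_i$ is built from $F_i \cup D_i$, with $D_i \subset X$ the trivial cap of $\mu_i$, by tubing self-intersections of $F_i$ onto parallel copies of $\hat\Sigma_i$; thus $[\tilde F_i] - [F_i \cup D_i]$ lies in $\langle [\hat\Sigma_j]\rangle = \ker \partial$, and so $\partial[\tilde F_i] = \partial[F_i \cup D_i]$. Splitting $F_i \cup D_i$ into its $W$-part $F_i$ (which, after pushing its interior off $\bbH$, meets $\bbH$ exactly along $\partial F_i = \eta_i$) and its $X$-part $D_i$, the Mayer--Vietoris boundary map gives $\partial[\tilde F_i] = \pm[\eta_i]$. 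Since the $[\eta_i]$ are a basis of $H_1(\bbH)$, the $2r$ classes $\{[\hat\Sigma_i]\} \cup \{[\tilde F_i]\}$ generate $H_2(N) \cong \Z^{2r}$ and therefore form a basis, completing (3).

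The long-exact-sequence bookkeeping is routine. The only step that really needs care is the computation of $\partial[\tilde F_i]$ in part (3): one must check that tubing onto the $\hat\Sigma_j$ alters the homology class only by elements of $\ker\partial$ (immediate once one observes $[\hat\Sigma_j] \in \ker\partial$), that $F_i$ can be isotoped to touch the gluing region $\bbH$ only along $\eta_i$, and that the leftover sign in ``$\pm[\eta_i]$'' is harmless. One also uses, but this is routine from the cut-and-paste description of multi-infection, that the $\mu_i$ appearing in (1) and (2) are genuinely meridians of $P$.
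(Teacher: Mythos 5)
Your proposal follows the same route as the paper: Seifert--van Kampen for (1) using $\pi_1(X)=1$ and the free homotopy from meridians of $L$ to meridians of $P$, and the Mayer--Vietoris sequence of $N = W \cup X$ glued along the handlebody for (2) and (3), with the key observation that $\eta_i \in \pi_1(W)^{(1)}$ kills the map $H_1(\bbH)\to H_1(W)$ and that tubing the $F_i$ onto copies of the $\hat\Sigma_i$ only changes the $H_2$-classes by elements already accounted for. Your treatment of the connecting homomorphism is a bit more explicit than the paper's (which simply says the tubing amounts to a change of basis), but it is the same argument and is correct.
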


\begin{proof}
	(1) $\pi_1(W)$ is normally generated by the meridians of $L$. In $N$, a meridian of $L$ is freely homotopic to  the corresponding meridian of $P$. Moreover, $\pi_1(X)=1$, and so by Seifert-van Kampen, $\pi_1(N)$ is normally generated by the meridians of $P$.
	
	(2 and 3) We have the following Mayer-Vietoris sequence:
	$$\begin{tikzcd}
		0 \arrow[r]
			& 0 \oplus H_2 (X) \arrow[r] \arrow[d, phantom, ""{coordinate, name=Z}]
			& H_2(N)  \arrow[dll,
											rounded corners,
											to path={ -- ([xshift=2ex]\tikztostart.east)
											|- (Z) [near end]\tikztonodes
											-| ([xshift=-2ex]\tikztotarget.west)
											-- (\tikztotarget)}] \\
		H_1(\nu(\E)) \arrow[r]
			& H_1(W) \oplus 0 \arrow[r]
			& H_1(N) \arrow[r]
			& 0
	\end{tikzcd}.$$
The map $H_1(\nu(\E)) \to H_1(W)$ is the zero map, since the $\eta_i$ lie in $\pi_1(W)^{(n)}$, and in particular in $\pi_1(W)^{(1)}$, and so are nullhomologous. This implies that the map $H_1(W) \to H_1(N)$ is an isomorphism, giving (2) after noting that the generators of $H_1(W)$ are the meridians of $L$, which are freely homotopic through $N$ to the meridians of $P$ in $M_P = \partial N$.

Moreover, $H_2(N) \cong H_2(X) \oplus H_1(\nu(\E))$. The generators of $H_2(N)$ from $H_2(X)$ are given simply by the $\hat\Sigma_i$, and the generators of $H_2(N)$ from $H_1(\nu(\E))$ can be represented by any immersed surface in $W$ bounded by $\eta_i$ glued to any immersed surface in $X$ bounded by the corresponding $\mu_i$ (giving a single immersed closed surface since $\eta_i$ and $\mu_i$ are identified). The surface $F_i$ is such a surface, and tubing away the self-intersections simply amounts to a change of basis for $H_2(N)$, giving (3).
\end{proof}

Now, suppose that $n=0$, or in other words that we drop any restriction  on the $\eta_i$. We then wish to show that $P(J)$ has pairwise vanishing linking numbers. Recall that given two components $K_0$ and $K_1$ of a link, the linking number $\lk(K_0,K_1)$ is defined to be the number of times $K_0$ crosses {\it under} $K_1$, counted with sign after choosing a diagram for $K_0 \sqcup K_1$. Choose diagrams for $J$ and for $L\sqcup\eta_1\sqcup\cdots\sqcup\eta_r$ such that in the diagram for $L\sqcup\eta_1\sqcup\cdots\sqcup\eta_r$, the link $\eta_1\sqcup\cdots\sqcup\eta_r$ appears as the standard diagram of the unlink, and the only crossings of this standard unlink with $L$ appear as strands of $L$ going through the $\eta_i$ as in Figure \ref{Fig:Multiinfection}. We can then form a diagram for $P$ by plugging in a diagram consisting of parallel copies of the components of $J$ where the $\eta_i$ appear in the diagram for $L\sqcup\eta_1\sqcup\cdots\sqcup\eta_r$. We may have to add twists in order to make sure that the parallel copies of the components of $J$ have linking number 0 with one another. One can then check that because $J$ has pairwise vanishing linking numbers, the linking numbers of $P$ are the same as the linking numbers of $L$. And since $L$ is a slice link, its linking numbers vanish pairwise. So $P$ is ``$(-0.5)$-solvable.''
\end{proof}

\begin{remark}
Notice that when $\eta_1,...,\eta_r$ are freely nullhomotopic in $W$, the manifold $W$ is an $(n)$-solution for $P$ for every $n\in\N$ (with possibly different surfaces witnessing the $(n)$-solvability for each $n$). In particular, Theorem \ref{Thm:nSolv} shows that the Whitehead double of the Borromean rings is $(n)$-solvable for every $n$ (or more generally the Whitehead double of any link with vanishing linking numbers). See Figure \ref{Fig:WhBR}. This fact is already well-known to the experts, in fact the manifold $W$ constructed in Section 5 of \cite{chaFamilyFreelySlice2020} is an $(n)$-solution for every $n$ for any good boundary link (although in this paper they use this manifold to show that every good boundary link with a Seifert surface admitting a homotopically trivial$^+$ good basis is freely topologically slice). The Whitehead double of the Borromean rings is an important test case: if it is freely topologically slice, then it is likely that the Disk Embedding Theorem holds without any fundamental group condition. For more information, see Sections 12.1-3 in \cite{freedmanTopology4manifolds1990} and Sections 23.1 and 23.2 of \cite{behrensDiskEmbeddingTheorem2021}. Theorems \ref{Thm:COTThm} and \ref{Thm:Slice} do not apply to this link since in the satellite construction, we have $2c=c+r=6$, but the Milnor's invariants of the Borromean rings only vanish up to length 2. In principle, improvements to theorems of this case might show that the Whitehead double of the Borromean rings is topologically slice.
\end{remark}

\begin{figure}
\includegraphics[scale=0.5] {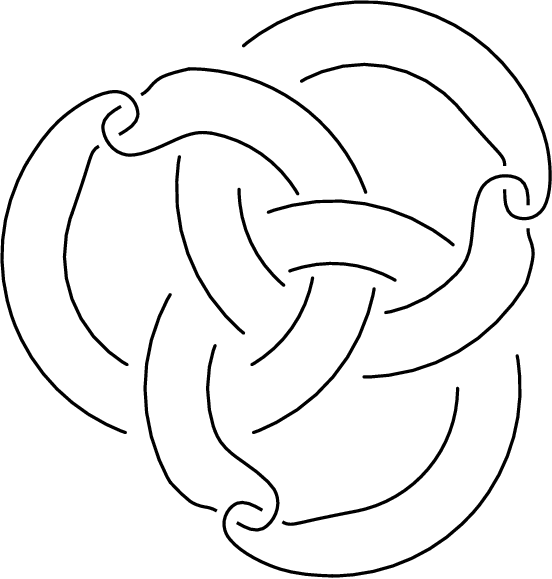}
\caption{The Whitehead double of the Borromean rings}
\label{Fig:WhBR}
\end{figure}

\section{Proof of the main theorem}
\label{Sec:Sliceness}

In this section, we will prove the main theorem of this paper:

\ThmSlice

Before proving Theorem \ref{Thm:Slice}, we will review the definition of the algebraic (self-)intersection numbers, following Section 11.3 in \cite{behrensDiskEmbeddingTheorem2021}, along with some definitions and a result from Chapters 5 and 6 \cite{freedmanTopology4manifolds1990}, and a result from \cite{cochranDerivativesLinksMilnor1990} (see also \cite{freedman4ManifoldTopologyII1995}).

Let $M$ be a connected based smooth oriented 4-manifold, and let $f$ and $g$ be generically, properly, and smoothly immersed based oriented spheres. We will be talking about curves on $f \cup g$ as elements of $\pi_1(M)$, so as in Remark \ref{Rmk:WhiskerIndependence}, we must choose whiskers $v_f$ and $v_g$ from the basepoint of $M$ to $f$ and $g$, respectively.

\begin{definition}
The {\it algebraic intersection number} $\lambda(f,g)$ is the element of $\Z[\pi_1(M)]$ defined by
	$$\lambda(f,g) := \sum_{p \in f \pitchfork g} \epsilon(p) \alpha(p),$$
where
\begin{itemize}
	\item $\alpha(p) = v_f \gamma_f^p (\gamma_g^p)^{-1} v_g^{-1} \in \pi_1(M)$,
	\item $\gamma_f^p$ (resp. $\gamma_g^p$) is a simple path in $f$ (resp. $g$) from the basepoint of $f$ (resp. $g$) to $p$ which does not hit any other points in $f \pitchfork g$, and
	\item $\epsilon(p) \in \{\pm 1\}$ is the sign of the intersection at $p$.
\end{itemize}
See Figure \ref{Fig:IntersectionNumberDef} for a schematic picture. Also define $\lambda(f,f):=\lambda(f,f^+)$ where $f^+$ is a transverse pushoff of $f$. Notice that distinct choices of $\gamma_f^p$ and $\gamma_g^p$ are homotopic since spheres are simply connected, and so this is well-defined.
\end{definition}

\begin{figure}
	\includegraphics{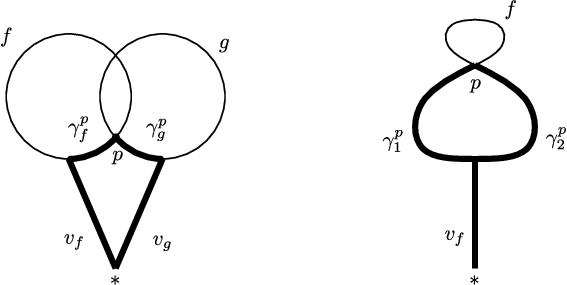}
	\caption{Schematic pictures of the algebraic intersection number (left) and algebraic self-intersection number (right)}
	\label{Fig:IntersectionNumberDef}
\end{figure}

See Proposition 11.4 in \cite{behrensDiskEmbeddingTheorem2021} for a proof that $\lambda(f,g)$ does not change under regular homotopies of $f$ and $g$, changes by a sign when the orientation of $f$ or $g$ is reversed, and changes in a controlled way under different choices of whiskers.

There is a natural involution on $\Z[\pi_1(M)]$ induced by inversion on $\pi_1(M)$, which we will denote by $\bar\cdot$.

\begin{definition}
The {\it algebraic self-intersection number} $\mu(f)$ is the element of $\Z[\pi_1(M)]/(a\sim\bar{a})$ defined by
	$$\mu(f) := \sum_{p \in f \pitchfork f} \epsilon(p)\alpha(p),$$
where
\begin{itemize}
	\item $\alpha(p) = v_f \gamma_0^p (\gamma_1^p)^{-1} v_f^{-1} \in \pi_1(M)$,
	\item $\gamma_0^p$ and $\gamma_1^p$ are simple paths in $f$ from the basepoint to $p$ along two different sheets which do not hit any other points in $f \pitchfork f$, and
	\item $\epsilon(p) \in \{\pm 1\}$ is the sign of the self-intersection at $p$.
\end{itemize}
See Figure \ref{Fig:IntersectionNumberDef} for a schematic picture. See Proposition 11.7 in \cite{behrensDiskEmbeddingTheorem2021} for a proof that this is well-defined.
\end{definition}

When $f$ is trivially framed, Proposition 11.8 in \cite{behrensDiskEmbeddingTheorem2021} states that $\lambda(f,f)=\mu(f)+\overline{\mu(f)}$ (note that a lift of $\mu(f)+\overline{\mu(f)}$ to $\Z[\pi_1(M)]$ is independent of the choice of lift of $\mu(f)$, and so this gives a well-defined element of $\Z[\pi_1(M)]$ even though {\it a priori} $\lambda(f,f)$ and $\mu(f)+\overline{\mu(f)}$ live in different groups). For our purposes in this paper, we will always be able to add local kinks when needed in order to ensure that $f$ is trivially framed, and so by Corollary 11.9 in \cite{behrensDiskEmbeddingTheorem2021}, we will have $\lambda(f,f)=0\Leftrightarrow\mu(f)=0$.

The following definitions can be found in Section 5.3 of \cite{freedmanTopology4manifolds1990}.

\begin{definition}
An $h$-cobordism rel boundary between two manifolds $M^n$ and $N^n$ with homeomorphic (possibly empty) boundary $P$ is a manifold $W^{n+1}$ with boundary $M \cup (P \times I) \cup N$ where $M$ and $N$ are glued to $P \times I$ along their boundaries, such that $W$ deformation retracts to $M$ and to $N$. Given homeomorphic properly immersed submanifolds $A \looparrowright M$, $B \looparrowright N$, an $h$-cobordism rel boundary of the pairs $(M,A)$, $(N,B)$ is an $h$-cobordism rel boundary $W$ of $M$ and $N$ together with a proper immersion $A \times I \looparrowright W$ which is a product on $P \times I$ and such that the restriction to $A\times\{0\}$ gives $A \looparrowright M$ and the restriction to $A \times \{1\}$ gives $B \looparrowright N$. See Figure \ref{fig:PairHCobordism} for a schematic picture.
\end{definition}

\begin{figure}
	\includegraphics{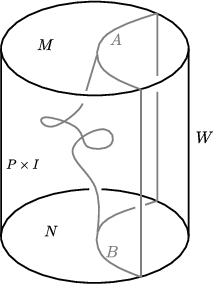}
	\caption{A schematic picture of an $h$-cobordism rel boundary of pairs. Note that $A$ and $B$ may also be immersed, but must be homeomorphic.}
	\label{fig:PairHCobordism}
\end{figure}

\begin{definition}
An $s$-cobordism (rel boundary of pairs) is an $h$-cobordism (rel boundary of pairs) such that the Whitehead torsion $\tau(W,M)$ vanishes. For a definition and discussion of the Whitehead torsion, see Section IV of \cite{cohenCourseSimplehomotopyTheory1973} (for a definition and discussion in the smooth setting, see \cite{milnorWhiteheadTorsion1966}).
\end{definition}

Although Theorem \ref{Thm:sCobordism} will guarantee us an $s$-cobordism, we will only use the fact that this $s$-cobordism is an $h$-cobordism, and in fact only that it gives a homotopy equivalence where $A$ and $B$ represent the same homology classes.

The algebraic (self-)intersection numbers show up in the following theorem which we will use in the proof of Theorem \ref{Thm:Slice}:

\begin{theorem}[Theorem 6.1 in \cite{freedmanTopology4manifolds1990}]
\label{Thm:sCobordism}
A $\pi_1$-null immersion of a union of transverse pairs of immersed spheres with algebraically trivial (self-)intersection numbers is $s$-cobordant rel boundary to a topological embedding.
\end{theorem}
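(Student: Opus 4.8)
The plan is to reduce the statement to Freedman's disk embedding theorem; this is how it is proved in \cite{freedmanTopology4manifolds1990}, and a detailed modern account is given in \cite{behrensDiskEmbeddingTheorem2021}. Write the given immersed union as $\mathcal{S} = \bigsqcup_i (S_i \cup T_i)$, where each $S_i \pitchfork T_i$ is a single transverse point and every other intersection among the sheets, as well as every self-intersection, is algebraically trivial. The first step is a reformulation of the goal: it suffices to produce an embedded union of transverse pairs that is \emph{homotopic} to $\mathcal{S}$. Indeed, the track in $M \times I$ of such a homotopy (arranged to be stationary near the two ends, with immersed intermediate stages) is the product cobordism from $M$ to $M$ together with a proper immersion of $(\bigsqcup S^2) \times I$ restricting to $\mathcal{S}$ at one end and to an embedding at the other; a product cobordism has vanishing Whitehead torsion, so this is an $s$-cobordism rel boundary of pairs.

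Next I would introduce Whitney disks. Since every intersection and self-intersection number of $\mathcal{S}$ other than the essential ones $\lambda(S_i,T_i)$ vanishes in $\Z[\pi_1(M)]$ (respectively its appropriate quotient), the excess intersection and self-intersection points group into pairs of opposite sign carrying the same element of $\pi_1(M)$, and each such pair bounds a framed immersed Whitney disk in $M$. Generically the interiors of these Whitney disks meet the sheets of $\mathcal{S}$ and one another, so I would then run the standard sequence of clean-up moves: boundary twists to correct the Whitney framings, together with piping and tubing along parallel copies of the geometric duals --- available because $S_i \cdot T_i = 1$ exhibits $T_i$ as a geometric dual for $S_i$ and conversely --- so as to arrange that the resulting family of Whitney disks is correctly framed, has algebraically trivial intersections among its members, and is equipped with algebraically dual spheres. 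These are the remaining hypotheses of the disk embedding theorem.

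The role of the $\pi_1$-null hypothesis is precisely to make the disk embedding theorem applicable with no restriction on $\pi_1(M)$. Because $\mathcal{S}$ is $\pi_1$-null, the Whitney circles and the double-point loops are nullhomotopic in $M$, so the Whitney disks can be taken $\pi_1$-null as well, and the whole configuration can be confined to a region whose fundamental group maps trivially to $\pi_1(M)$; hence the group over which the disk-embedding obstruction theory is run is effectively the trivial group, which is good. Granting the disk embedding theorem, it then supplies a family of pairwise disjoint, flatly embedded topological Whitney disks with the same framed boundaries and with interiors meeting $\mathcal{S}$ only along the Whitney arcs. Performing the corresponding Whitney moves --- each a homotopy of one sheet, supported near its Whitney disk, that deletes a cancelling pair of intersection points, creates none, and leaves the essential point $S_i \cap T_i$ untouched --- carries $\mathcal{S}$ by a homotopy to an embedded union of transverse pairs, and the reformulation in the first paragraph finishes the argument.

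The hard part is everything packaged into ``apply the disk embedding theorem'': that theorem is the deep topological input, and even granting it the clean-up in the second step is substantial --- producing honest algebraically dual spheres for the Whitney disks, controlling the framings through the various moves, and making precise the way in which the $\pi_1$-null hypothesis stands in for a good-fundamental-group hypothesis. All of this is the content of the relevant chapters of \cite{freedmanTopology4manifolds1990}, and in contemporary form of \cite{behrensDiskEmbeddingTheorem2021}, so in practice I would invoke the result rather than reprove it.
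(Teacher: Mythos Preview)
The paper does not give a proof of this statement: it is quoted as Theorem~6.1 of \cite{freedmanTopology4manifolds1990} and invoked as a black box in the proof of Theorem~\ref{Thm:Slice}. There is therefore nothing in the present paper to compare your proposal against. Your sketch is a reasonable outline of how the argument runs in the original source and in \cite{behrensDiskEmbeddingTheorem2021}, and indeed your final paragraph acknowledges that in practice one invokes the result rather than reproving it --- which is exactly what the paper does.

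One small comment on the sketch itself: your first-paragraph reduction says that it suffices to produce an embedding \emph{homotopic} to $\mathcal{S}$ inside the same $M$, since the track of the homotopy in $M\times I$ is then a product (hence $s$-) cobordism of pairs. This is fine here, and it is worth noting that it is precisely the $\pi_1$-null hypothesis that makes this reduction available: it lets one confine the whole Whitney-disk problem to a region with trivial image in $\pi_1(M)$, so the disk embedding theorem applies over the trivial group and yields embedded Whitney disks in $M$ itself. Absent $\pi_1$-nullity (and a good-group hypothesis), one would not in general expect to stay in the same ambient manifold, which is why the conclusion is phrased as ``$s$-cobordant'' rather than ``homotopic.''
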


The Milnor's invariant condition will show up via the following theorem:

\begin{theorem}[Corollary 9.4 in \cite{cochranDerivativesLinksMilnor1990}, see also Lemma 2.7 in \cite{freedman4ManifoldTopologyII1995}]
\label{Thm:MilnorImmersedDisks}
For a link $\Gamma$ and $\ell \geq 2$, the following are equivalent:
\begin{itemize}
\item all Milnor's $\bar\mu$-invariants up to length $\ell$ vanish for $\Gamma$
\item if $\tilde{\Gamma}$ is a $\ell$-component link consisting of parallel copies of components of $J$, then $\tilde{\Gamma}$ bounds disjoint immersed disks in $B^4$.
\end{itemize}
\end{theorem}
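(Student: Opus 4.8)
The plan is to reduce the statement to the classical relationship between Milnor's invariants with \emph{distinct} indices, link homotopy, and parallel copies. I would use two classical facts. First, fact (A): for a multi-index $I$ with entries in $\{1,\dots,r\}$ (repetitions allowed), the invariant $\bar\mu_I(J)$ equals the corresponding \emph{distinct}-index invariant $\bar\mu_{I^\flat}(\tilde J_I)$, where $\tilde J_I$ is the link obtained from $J$ by replacing the $j$-th component with one (appropriately framed) parallel copy for each occurrence of $j$ in $I$, and $I^\flat$ relabels the entries of $I$ accordingly; together with the naturality of $\bar\mu$-invariants under restriction to sublinks. Second, fact (B), Milnor's theorem: an $s$-component link is link-homotopically trivial if and only if every $\bar\mu$-invariant of it with distinct indices (necessarily of length at most $s$) vanishes. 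Granting (A), a short bookkeeping argument shows that ``$\bar\mu_I(J)=0$ for all $|I|\le\ell$'' is equivalent to ``every $\ell$-component link $\tilde J$ of parallel copies of components of $J$ has all of its distinct-index $\bar\mu$-invariants equal to zero'': in one direction a distinct-index invariant of such a $\tilde J$ is, by (A), a repeated-index invariant of $J$ of the same length $\le\ell$; conversely, for $|I|\le\ell$ the number $\bar\mu_I(J)$ is by (A) a distinct-index invariant of $\tilde J_I$, which is a sublink of some $\ell$-component link of parallel copies, so it vanishes by naturality. It therefore remains to prove, for each $\ell$-component link $\tilde J$ of parallel copies,
\[
\tilde J \text{ bounds disjoint generically immersed disks in } B^4 \iff \tilde J \text{ is link-homotopically trivial,}
\]
and then apply (B).

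The backward implication here is soft: given a link homotopy from $\tilde J$ to the $\ell$-component unlink, its track in $S^3\times[0,1]$ is a disjoint union of generically immersed annuli, one per component; capping the unlink end off with $\ell$ disjoint embedded disks in a collar copy of $B^4$ yields disjoint generically immersed disks in $S^3\times[0,1]\cup B^4\cong B^4$ bounded by $\tilde J$.

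The forward implication is the main geometric step, and I would prove it by Morse theory on the disks. Suppose $\tilde J$ bounds disjoint generically immersed disks $\Delta_1,\dots,\Delta_\ell\subset B^4$. Put the $\Delta_i$ in generic position with respect to the radial Morse function on $B^4$, so that each $\Delta_i$ is a properly Morse-immersed disk whose critical points are local minima and saddles (interior maxima can be cancelled against saddles first) and whose finitely many transverse self-intersection points lie at regular levels. Since $\chi(D^2)=1$, the usual handle cancellation — performed inside small balls, hence without disturbing the disjointness from the other $\Delta_j$ — reduces each $\Delta_i$ to a single local minimum and no saddles. Reading the portion of $\Delta_i$ between its top level (where the level set is the component $\tilde J_i$) and its minimum as a movie, $\Delta_i$ now describes a generic regular homotopy of $\tilde J_i$ to a small round circle, interrupted only by the transverse self-crossings coming from the self-intersections of $\Delta_i$; the round circle then bounds the minimum disk. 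Running all $\ell$ of these movies simultaneously and using that the $\Delta_i$ are pairwise disjoint gives a link homotopy from $\tilde J$ to the $\ell$-component unlink, as required.

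The genuine obstacle is fact (B): producing an actual link homotopy to the unlink from the vanishing of all distinct-index $\bar\mu$-invariants is Milnor's theorem (with the indeterminacies of the higher invariants handled, for the general formulation, by Habegger--Lin), whose proof via the reduced free group is precisely the classical content this paper cites rather than reproduces; link-homotopy invariance of the $\bar\mu$'s supplies the other half of (B). One further point deserving care is the bookkeeping in (A) together with the indexing: ``$\bar\mu$-invariants up to length $\ell$'' matches ``$\ell$-component links of parallel copies'' exactly because a distinct multi-index on an $\ell$-component link has length at most $\ell$ and because repeated-index invariants of $J$ of length $k$ are distinct-index invariants of $k$-component parallel links, so no off-by-one appears — but the precise form of (A), including the role of the framings of the parallel copies, should be spelled out explicitly.
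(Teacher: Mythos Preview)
The paper does not prove this statement; it is quoted as a known result from Cochran's memoir (Corollary~9.4) and Freedman--Teichner (Lemma~2.7) and used as a black box in the proof of Theorem~\ref{Thm:Slice}. So there is no proof in the paper to compare against.

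Your outline is essentially the standard argument and is correct in spirit. The reduction via (A) and (B) is exactly how Cochran's Corollary~9.4 is obtained from his Proposition~9.3 together with Milnor's link-homotopy theorem, and your bookkeeping between ``repeated indices of length $\le\ell$ on $J$'' and ``distinct indices on $\ell$-component parallel copies'' is right. Two small points worth tightening: first, the Morse step needs both max--saddle and min--saddle cancellations (you invoke $\chi(D^2)=1$ but only mention cancelling maxima explicitly); the reason cancellation preserves disjointness from the other $\Delta_j$ is that the cancelling arc is $1$-dimensional in a $4$-manifold and hence generically misses the $2$-dimensional $\Delta_j$, not merely that it sits in a ``small ball.'' This forward implication is, incidentally, the content of the Giffen--Goldsmith theorem that singular concordance implies link homotopy, which you are essentially reproving. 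Second, fact~(B) in the form you use --- all distinct-index $\bar\mu$ vanish iff the link is link-homotopically trivial --- is already Milnor's theorem; Habegger--Lin is only needed for the full classification beyond the trivial class, so your parenthetical slightly overstates what must be imported.
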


We will now prove the main theorem of this paper.

\begin{proof}[Proof of Theorem \ref{Thm:Slice}]
Let $N$ be the same manifold constructed in section \ref{Sec:nSolvability} by gluing the trace $X$ of $\hat{J}$ to $W$ along handlebody thickenings of $\E \subset \partial W$ and the standard multi-disk $\F \subset M_L = \partial W$. Recall that $\mu_i$ are fixed meridians of $\hat{J}$ in $M_{\hat{J}}=\partial X$, and the $\eta_i$ are boundaries of the $E_i\subset\E_i$, and are identified with the $\mu_i$ under the gluing. We will find immersed spheres rather than embedded surfaces representing a basis of $H_2(N)$, which we can then surger out using \cite{freedmanTopology4manifolds1990}. These immersed spheres will come in geometrically transverse pairs, and in order to ensure this, we must construct them in some sense simultaneously.

Recall that by assumption the curves $\eta_i$ bound immersed disks $\delta_i$ in $W$, which together have a total of $c$ (self-)intersections. Since these disks are nullhomotopies in $B^4 \supset W$ of curves in $S^3$, their framings must be even.

The disks $\delta_i$ may be capped off inside $X$ using the trivial disks bounded by the $\mu_i$ to obtain immersed spheres $A=\alpha_1\cup\cdots\cup\alpha_r$. We will eventually want to modify these spheres to be $\pi_1$-null. To do this we will begin by tubing the (self-)intersections in $A$ to the gluing region between $W$ and $X$, as if we were going to tube on a sphere, but right now we have no sphere to tube on. Notice that this gives a $c$-component link consisting of parallel copies of components of $\hat{J}$. We consider this link together with the attaching circles of the 2-handles in $X$, which gives a $($\hl{$c+r$}$)$-component link consisting of parallel copies of components of $\hat{J}$. This link bounds disjoint immersed disks in $B^4 \subset X$ since the Milnor $\bar\mu$-invariants of $\hat{J}$ vanish up to length \hl{$c+r$} by Theorem \ref{Thm:MilnorImmersedDisks} (Lemma 2.7 in \cite{freedman4ManifoldTopologyII1995}). Since these disks are nullhomotopies in $B^4$ of curves in $S^3$, their framings are even. Cap off the components of this link with these disks to obtain immersed spheres $\tilde{A}=\tilde\alpha_1\cup\cdots\cup\tilde\alpha_r$ and (geometrically) transverse spheres $B=\beta_1\cup\cdots\cup\beta_r$ coming from disks bounded by one copy of each component of $\hat{J}$ plus the cores of the 2-handles. Notice that of the disjoint immersed disks guaranteed by the Milnor's invariant condition, $c$ of them are used in forming the $\tilde\alpha_i$ and $r$ of them are used in forming the $\beta_i$. The spheres $A$ and $B$ form a symplectic basis of $H_2(N)$, as do $\tilde{A}$ and $B$, by the same reasoning as in Lemma \ref{Lem:nSoln}. Moreover, $\tilde{A}$ and $B$ must have even framings, and we can add local kinks inside $B^4 \subset X$ to make the framings trivial. See Fig.\ \ref{Fig:ImmersedSpheres} for a schematic pictures.

\begin{figure}
\includegraphics{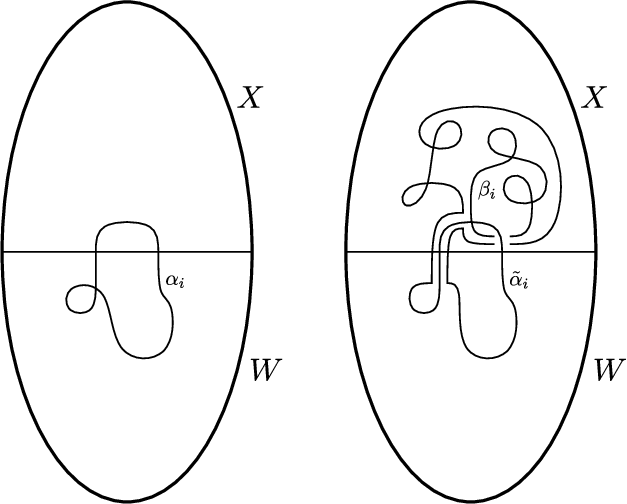}
\caption{Schematic of $\tilde{A}=\tilde\alpha_1\cup\cdots\cup\tilde\alpha_r$ and $B=\beta_1\cup\cdots\cup\beta_r$ inside $N$}
\label{Fig:ImmersedSpheres}
\end{figure}

First we will check that the collection of spheres $\tilde{A} \cup B$ is $\pi_1$-null. The $\tilde\alpha_i$ and $\beta_i$ are already geometrically dual, and so we only need to check that loops in each of the $\tilde\alpha_i$ and $\beta_i$ are freely nullhomotopic in $N$. The $\beta_i$ lie entirely in $X$, which is simply connected, so any loops in the $\beta_i$ are trivial. A schematic picture of the $\tilde\alpha_i$ is show in Fig.\ \ref{Fig:tildealphaiSchematic}. The portion of $\tilde\alpha_i$ which lies outside $B^4$ is a several-times punctured sphere, which deformation retracts to a wedge of the boundary circles, and so $\tilde\alpha_i$ deformation retracts to a wedge of the disjoint immersed disks in $B^4$. Therefore any loop  in $\tilde\alpha_i$ is a product of loops entirely contained in these disjoint immersed disks, and so is freely nullhomotopic in $B^4$, and so the whole loop is freely nullhomotopic in $N$. Thus, the $\tilde\alpha_i$ are $\pi_1$-null.

The same argument gives that the algebraic self-intersection numbers of the $\tilde\alpha_i$ and $\beta_i$ are all 0. (If the collections $\tilde{A}$ and $B$ were not already geometrically dual, we would have more work to do.)

The algebraic intersection numbers $\lambda(\tilde\alpha_i,\tilde\alpha_j)$, $\lambda(\beta_i,\beta_j)$, and $\lambda(\tilde\alpha_i,\beta_j)$ for $i \neq j$ are 0 since these pairs of spheres are disjoint.

\begin{figure}
\includegraphics[scale=0.5]{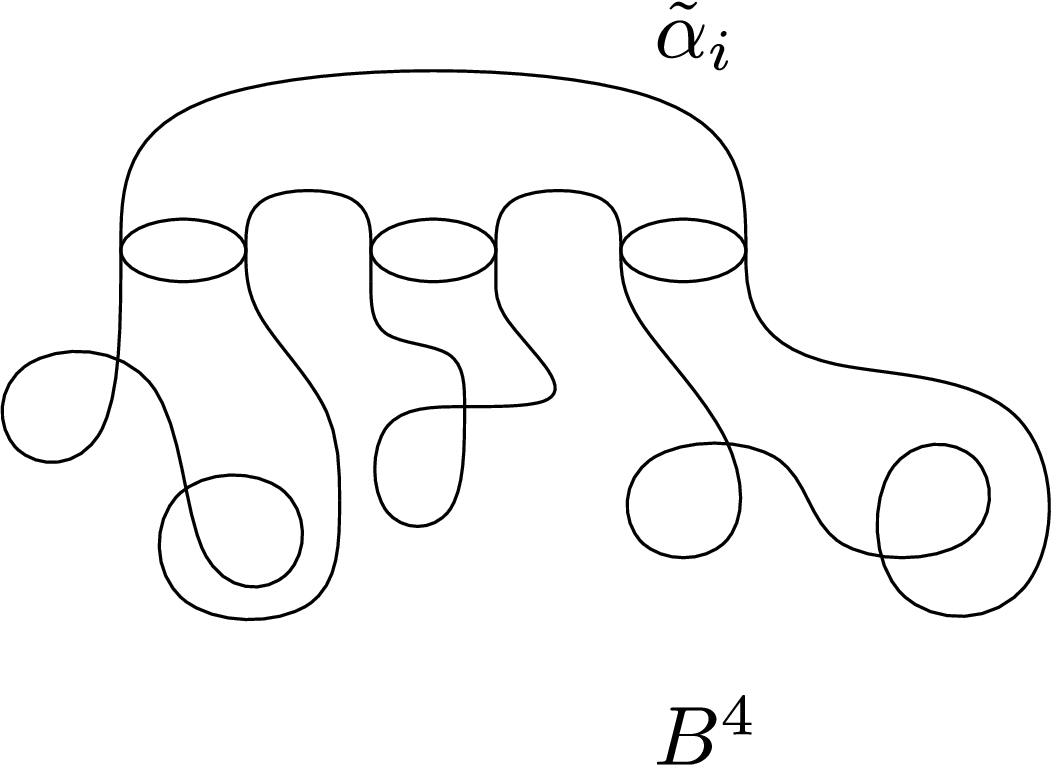}
\caption{Computation of the self-intersection numbers of the $\tilde\alpha_i$}
\label{Fig:tildealphaiSchematic}
\end{figure}

Therefore, by Theorem \ref{Thm:sCobordism} (Thm.\ 6.1 in \cite{freedmanTopology4manifolds1990}) there is an $s$-cobordism rel boundary of the pair $(N, \tilde{A} \cup B)$ to a pair $(N',\text{embedded geometrically dual spheres})$ where the embedded spheres represent the same homology classes and still come in pairs which intersect geometrically once transversely. We can then do surgery on one sphere from each pair to obtain a manifold which satisfies the conditions of Proposition \ref{TopSliceCondition}. Therefore the multi-infection $P$ is topologically slice.
\end{proof}

\begin{remark}
\label{Rmk:Degenerate}
If any of the $\delta_i$ are embedded and disjoint from the remaining $\delta_i$, then the corresponding sphere $\alpha_i$ is embedded (see Figure \ref{Fig:ImmersedSpheres}). In this case, $\alpha_i=\tilde\alpha_i$, and $\beta_i$ can be made embedded by tubing copies of $\alpha_i$ onto one sheet of each self-intersection. We thus obtain an embedded geometrically dual pair of spheres without using Theorem \ref{Thm:sCobordism}. The rest of the proof then proceeds the same way. We can use this technique to first reduce to the case where $r \leq c$, where Theorem \ref{Thm:Slice} is stronger than Theorem \ref{Thm:COTThm} (but still not strictly stronger). The original proof of Theorem \ref{Thm:COTThm} can similarly be modified to avoid using such powerful tools on any of the $\delta_i$ which are embedded and disjoint from the other $\delta_i$.
\end{remark}

\begin{remark}
\label{Rmk:MilnorRefinement}
A more refined statement of Theorem \ref{Thm:MilnorImmersedDisks} (Corollary 9.4 in \cite{cochranDerivativesLinksMilnor1990}) is the following theorem: (which appears as Proposition 9.3 in \cite{cochranDerivativesLinksMilnor1990}):
\begin{theorem}[Proposition 9.3 in \cite{cochranDerivativesLinksMilnor1990}]
For an $r$-component link $\Gamma$ and $\ell \geq 2$, and $\ell_1,...,\ell_r \in \Z^+$ such that $\ell_1+\cdots+\ell_r=\ell$, the following are equivalent:
\begin{itemize}
\item Milnor's invariants $\bar\mu_\Gamma(I)=0$ for all sequences $I$ containing no more than $\ell_i$ occurrences of the index $i$.
\item The link $\tilde{\Gamma}$, which consists of $\ell_i$ parallel copies of the $i$th component of $\Gamma$, bounds disjoint immersed disks in $B^4$
\end{itemize}
\end{theorem}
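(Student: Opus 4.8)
The plan is to obtain this refinement from the link-homotopy reformulation of Milnor's invariants, which is the mechanism already behind Theorem~\ref{Thm:MilnorImmersedDisks}; one may in fact simply cite it, since the unrefined statement is exactly the conjunction of the refined one over all compositions $\ell=\ell_1+\cdots+\ell_r$. To give a direct argument I would proceed as follows.

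\textbf{Step 1: disjoint immersed disks versus link-homotopic triviality.} First I would prove that a link $\tilde{J}\subset S^3$ bounds disjoint immersed disks in $B^4$ if and only if $\tilde{J}$ is link-homotopic to the unlink. One direction is easy: running a link null-homotopy of $\tilde{J}$ as a movie in $S^3\times I$ sweeps out pairwise disjoint immersed annuli (distinct components never cross, and self-intersections of a single component sweep out the self-intersections of its annulus), and capping the unlink end by pairwise disjoint embedded disks gives pairwise disjoint immersed disks in $(S^3\times I)\cup_{S^3}B^4\cong B^4$. For the converse, take disjoint immersed disks $D_i$, make the radial Morse function $\rho$ of $B^4$ generic on each $D_i$, and use a Morse-theoretic argument to eliminate the index-$2$ critical points of each $\rho|_{D_i}$ (each $D_i$ is a disk with boundary at the maximum level). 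The resulting movie exhibits each component dying through ambient isotopies and band moves, pairwise disjointly, with the self-intersections of $D_i$ showing up as self-crossings of component $i$; this is a link null-homotopy.

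\textbf{Step 2: Milnor's criterion and the index dictionary.} Next I would invoke Milnor's theorem that a link is link-homotopically trivial exactly when all its $\bar\mu$-invariants with pairwise distinct indices vanish (proved inductively on the number of components: once the first $k$ components bound disjoint immersed disks, delete them, and since the lower central series quotients of the remaining complement are then free nilpotent on the surviving meridians, the $(k+1)$st longitude dies in the appropriate quotient precisely when the relevant $\bar\mu$'s do). It then remains to match up index data. For $\tilde{J}$ consisting of $\ell_i$ parallel pushoffs of the $i$th component of $J$, forgetting which parallel copy a strand belongs to sends a pairwise-distinct multi-index of $\tilde{J}$ to a multi-index $I$ of $J$ with at most $\ell_i$ occurrences of $i$, and this map is onto; collapsing the parallel meridians identifies the associated Milnor invariants. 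Because a $\bar\mu$-invariant of a given length is a well-defined integer only once all shorter ones vanish (for $\tilde{J}$ and for $J$ alike), the condition ``every pairwise-distinct $\bar\mu_{\tilde{J}}$ vanishes'' is equivalent to ``$\bar\mu_J(I)=0$ for every $I$ with at most $\ell_i$ occurrences of $i$'', which closes the argument.

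\textbf{Main obstacle.} The genuinely delicate input is Step~2's appeal to Milnor's triviality criterion, whose clean formulation requires carefully tracking how the indeterminacies of the longer $\bar\mu$-invariants are removed inductively, so that the biconditional really holds between the full families of invariants rather than invariant-by-invariant. In Step~1 the handle-trading removing index-$2$ critical points, and in Step~2 the verification that collapsing parallel meridians matches the Magnus expansions, are routine but must be carried out keeping the immersed disks pairwise disjoint. Everything but the final index bookkeeping is already contained in Theorem~\ref{Thm:MilnorImmersedDisks} (Corollary~9.4 of \cite{cochranDerivativesLinksMilnor1990}), whose proof applies verbatim once ``length at most $\ell$'' is refined to ``at most $\ell_i$ copies of the index $i$''.
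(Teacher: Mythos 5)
The paper itself does not prove this statement: it is quoted, with attribution, as Proposition 9.3 of \cite{cochranDerivativesLinksMilnor1990} (just as Theorem \ref{Thm:MilnorImmersedDisks} is quoted as Corollary 9.4 there), so your proposal has to stand on its own as a reconstruction of Cochran's argument. Your overall route --- (i) disjoint immersed disks in $B^4$ versus link-homotopic triviality, (ii) Milnor's criterion that a link is homotopically trivial iff all $\bar\mu$-invariants with pairwise distinct indices vanish, (iii) a cabling dictionary identifying distinct-index invariants of $\tilde{J}$ with invariants of $J$ having at most $\ell_i$ occurrences of the index $i$ --- is indeed the standard mechanism underlying both Proposition 9.3 and Corollary 9.4, so the architecture is right.

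The genuine gap is in the converse half of your Step 1, which is exactly the direction needed for ``disks $\Rightarrow$ vanishing.'' The implication ``$\tilde{J}$ bounds disjoint immersed disks in $B^4$ $\Rightarrow$ $\tilde{J}$ is null link-homotopic'' is the (singular) concordance-implies-link-homotopy theorem of Giffen and Goldsmith; it does not follow from making the radial function generic and ``eliminating the index-$2$ critical points.'' Read inward, a generic immersed disk produces births and saddles as well as self-crossing changes, and there is no general cancellation of these critical points realizable by a modification of the disks that keeps them pairwise disjoint --- if there were, Giffen--Goldsmith would be a Morse-theory exercise, which it is not. You should either cite that theorem or bypass link homotopy in this direction entirely, e.g.\ by a Stallings/Dwyer-type argument on the exterior of the disks in $B^4$ showing that the relevant longitudes die in the appropriate nilpotent quotients, which kills the stated $\bar\mu_J(I)$ directly. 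Two smaller points: in Step 2 the identification of invariants under collapsing parallel meridians holds for untwisted parallels and only in the first non-vanishing range, so the framing convention and the inductive treatment of indeterminacy (which you flag but do not carry out) are where the real bookkeeping lies; and your opening remark that the unrefined statement is the conjunction of the refined one over all compositions runs in the wrong direction --- the refined equivalence for a fixed $(\ell_1,\dots,\ell_r)$ cannot be deduced from Corollary 9.4 (and length-$\ell$ sequences omitting some index are not dominated by any positive composition), so citing the unrefined result is not a substitute for the argument.
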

We could have used this theorem rather than Theorem \ref{Thm:MilnorImmersedDisks} in the proof of Theorem \ref{Thm:Slice} to obtain a refined Milnor's invariant condition. In particular, define $\ell_i$ as follows:
\begin{enumerate}[label=(\roman*)]
\item start with $\ell_1=\cdots=\ell_r=1$, then
\item for each self-intersection of the disk bounded by $\eta_i$, add 1 to $\ell_i$, and
\item for each intersection between the disks bounded by $\eta_i$ and $\eta_j$ for $i \neq j$, add 1 to either $\ell_i$ or $\ell_j$.
\end{enumerate}
Then if $\bar\mu_J(I)=0$ for all $I$ containing no more than $\ell_i$ occurrences of $i$, then the multi-infection of $L$ by $J$ is still topologically slice. Here, for each intersection between disks, we made a choice, and as long as the Milnor's invariant condition holds for at least one of our possible choices, the conclusion still holds.
\end{remark}

\section{Example of links which are topologically slice by Theorem \ref{Thm:Slice}, but not by Theorem \ref{Thm:COTThm}}
\label{Sec:Ex}

In this section, we will describe an example of a link which is topologically slice by Theorem \ref{Thm:Slice}, but not by Theorem \ref{Thm:COTThm}. We will need the following lemma:

\begin{lemma}
\label{Lemma:Surfaces}
Let $H$ be a link with pairwise linking numbers 0 which bounds a collection of immersed disks with $c$ (self-)intersections. Then the components of $H$ bound disjoint embedded surfaces, the sum of whose genera is $\leq c$. In other words, if $c(H)$ is the minimal number of (self-)intersections in a collection of immersed disks bounded by $H$, then we have the following inequality:
	$$g_4(H) \leq c(H),$$
where $g_4$ is the slice genus.
\end{lemma}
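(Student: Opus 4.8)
The plan is to start from a generic collection of properly immersed disks $\delta = \delta_1 \cup \cdots \cup \delta_r \subset B^4$ with $\partial\delta_i = H_i$ and a total of $c$ transverse double points, and to remove the double points one at a time by ambient surgery, tracking the genus introduced and making sure the pieces belonging to distinct components stay disjoint. Write $c = c' + c''$, where $c'$ counts the self-intersections of the individual $\delta_i$ and $c''$ counts the intersection points between distinct $\delta_i$ and $\delta_j$; orient the $\delta_i$ compatibly with the given orientation of $H$, so each double point has a sign.

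First I would deal with the self-intersections. A transverse self-intersection $p$ of $\delta_i$ has a standard local model: in a small $4$-ball around $p$, $\delta_i$ is a pair of transverse $2$-disks whose boundaries form a Hopf link in the bounding $S^3$. Replacing those two disks by an embedded Hopf band (equivalently, the ``complex smoothing'' of $\{xy = 0\}$ to $\{xy = \varepsilon\}$, with the sign of $\varepsilon$ chosen to match the sign of $p$ so that the result is orientable) removes the double point and raises the genus of $\delta_i$ by exactly $1$. Performing this at each of the $c'$ self-intersections — and changing nothing else — yields immersed surfaces (now of positive genus) still bounded by the $H_i$, with the $c''$ intersections between distinct disks untouched, at a total genus cost of $c'$.

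The substantive step, and the main obstacle, is removing the intersections between distinct disks while keeping the surfaces for distinct components \emph{disjoint}: naively smoothing such a point would amalgamate $\delta_i$ and $\delta_j$ into a single surface bounding two components of $H$, which is not allowed. This is exactly where the linking-number hypothesis enters. For surfaces in $B^4$ bounding disjoint curves in $S^3$ one has $\delta_i \cdot \delta_j = \lk(H_i, H_j)$ (the algebraic count depends only on the boundary links, and self-intersections of the $\delta_k$ are irrelevant to it), so under our hypothesis the points of $\delta_i \cap \delta_j$ occur in pairs $\{p, q\}$ of opposite sign. For each such pair I would choose an embedded arc $\gamma$ from $p$ to $q$ inside $\delta_j$ whose interior avoids all other double points, take a small tubular neighborhood $\nu(\gamma) \cong \gamma \times D^3$ in $B^4$ (chosen generically, so that it meets the rest of the configuration only in the two small transverse sheets of $\delta_i$ through $p$ and $q$), delete those two sheets from $\delta_i$, and glue in their place a ``meridional tube'' of $\gamma$: an embedded annulus in $\nu(\gamma)$, disjoint from $\delta_j$, whose two boundary circles are precisely the meridians of $\delta_j$ that the deleted sheets leave behind on $\delta_i$ (the opposite signs of $p$ and $q$ are what make the two gluings orientation-compatible). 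The effect is to delete both $p$ and $q$, to create no new double points of any kind, and to raise the genus of $\delta_i$ by exactly $1$. Iterating over all such opposite-sign pairs, in all $\delta_i \cap \delta_j$ — re-choosing the arcs and tubular neighborhoods generically at each stage so that a later surgery never reintroduces an intersection already removed — makes the collection pairwise disjoint and embedded at a total further genus cost of $\tfrac{1}{2}c''$.

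Combining the two steps, the $H_i$ bound pairwise disjoint embedded orientable surfaces of total genus at most $c' + \tfrac{1}{2}c'' \le c' + c'' = c$, which is the first assertion; the inequality $g_4(H) \le c(H)$ then follows at once by applying this to immersed disks realizing $c(H)$. I expect the only work beyond the above to be the genericity bookkeeping in the iteration (keeping each surgery supported in a small ball disjoint from everything it must avoid) and the orientation checks in the two surgery moves, both of which are routine.
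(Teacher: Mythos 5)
Your proposal is correct and is essentially the paper's own argument: resolve each self-intersection by replacing the local cone on a Hopf link with a (suitably twisted) annulus at genus cost $1$, and use the vanishing linking numbers to pair up opposite-sign intersections between distinct disks and remove each pair by a tube (a meridional annulus along an arc in one of the sheets), again at genus cost $1$ per pair. Your bookkeeping $c' + \tfrac{1}{2}c'' \le c$ even matches the paper's parenthetical remark that the genus achieved lies between $\tfrac{c}{2}$ and $c$; the extra genericity and orientation details you supply are fine and routine.
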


\begin{remark}
The previous lemma holds in both the smooth and topological settings, and the proof is the same regardless.
\end{remark}

\begin{proof}
	Since $H$ has pairwise linking number 0, intersections between disks bounded by different components of $H$ must come in pairs with opposite signs, which can then be tubed together. This process increases the genus of the resulting surface by 1 each time. Any self-intersections can be resolved by deleting a neighborhood of the intersection point and gluing in a twisted annulus whose boundary is the resulting Hopf link, which also increases the genus by 1 each time. The sum of the genera of the resulting surfaces is then $\leq c$ (it is also $\geq\frac{c}{2}$, though we won't use this). See Fig.\ \ref{Fig:Surfaces} for a schematic picture.
\end{proof}

\begin{figure}
	\includegraphics{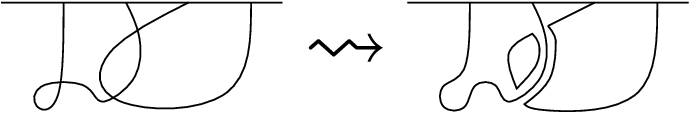}
	\caption{Resolving (self-)intersections using tubes and twisted annuli}
	\label{Fig:Surfaces}
\end{figure}

\begin{figure}
	\includegraphics[scale=0.5]{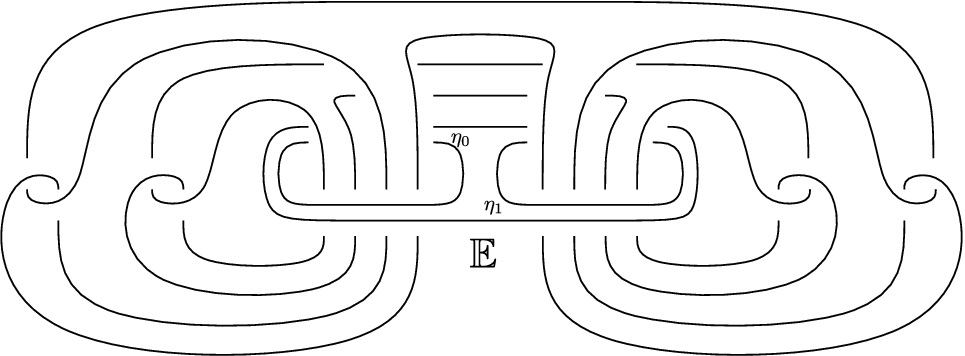}
	\caption{A proper multidisk $\E$ in the complement of a $2$-component unlink $L$, such that the curves $\eta_1$ and $\eta_2$ are nullhomotopic in the complement of some collection of slice disks for $L$, but do not bound immersed disks with fewer than 3 total (self-)intersections}
	\label{Fig:Example}
\end{figure}

\begin{figure}
	\includegraphics[scale=0.5]{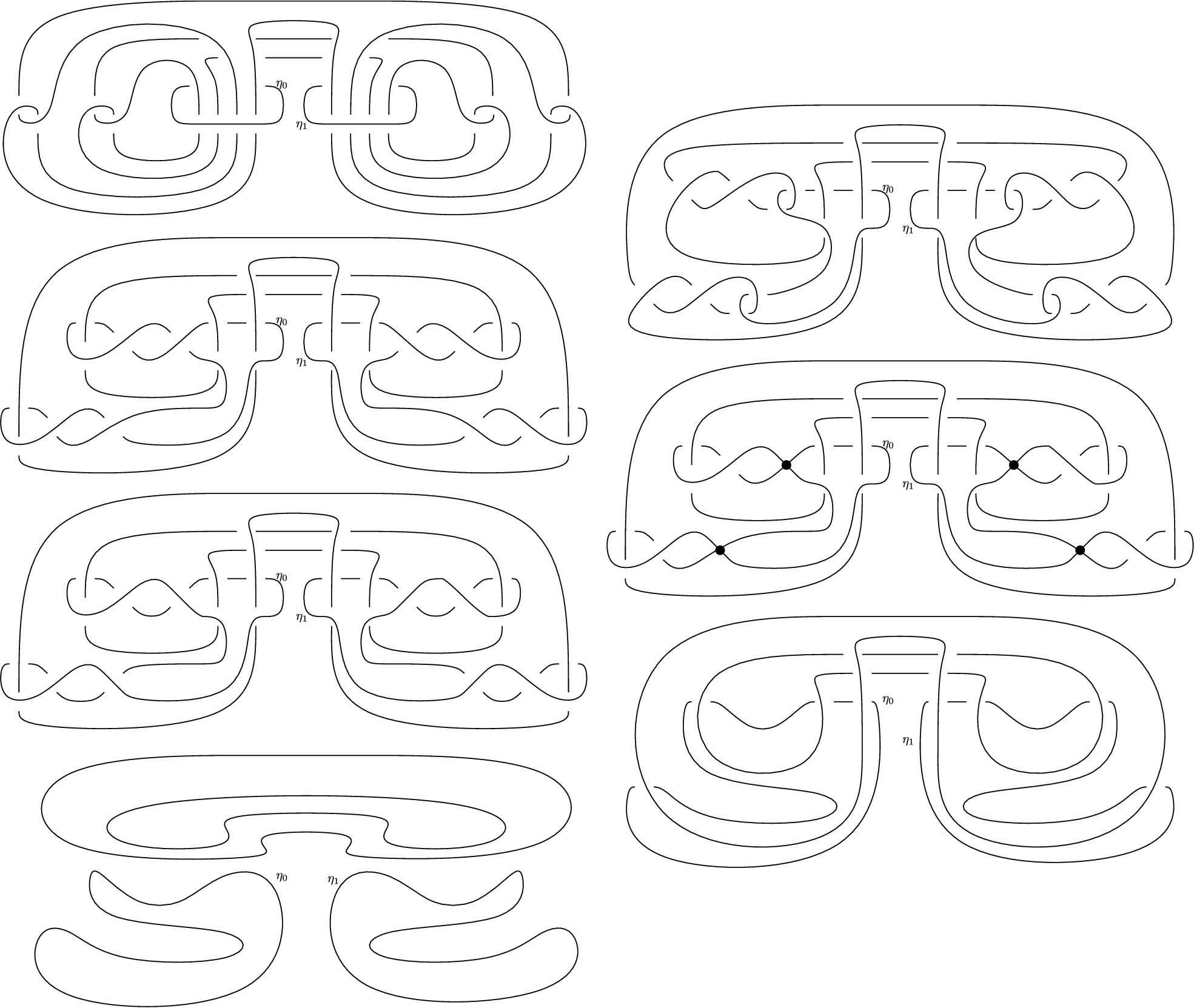}
	\caption{A sequence of diagrams showing that the $\eta_i$ are nullhomotopic in the complement of slice disks for $L$ (in fact, just in the complement of $L$ in $S^3$). Self-intersections during the course of this nullhomotopy are labeled by dots. The sequence should be read top to bottom, zigzagging left to right down the page.}
	\label{Fig:ExampleNullhomotopy}
\end{figure}

\begin{figure}
	\includegraphics[scale=0.5]{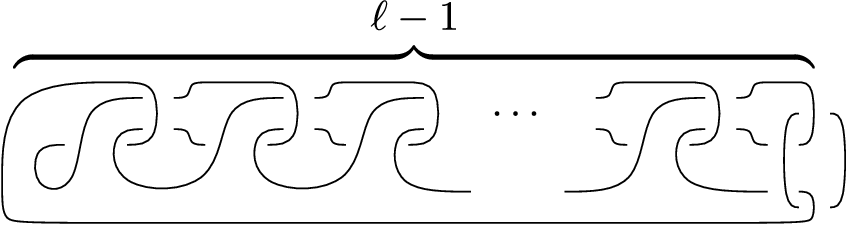}
	\caption{A 2-component link whose Milnor's $\bar\mu$-invariants vanish up to length $\ell$, but not up to length $\ell+1$.}
	\label{Fig:Brunnian}
\end{figure}

\begin{ex}
We will now demonstrate an example of a link which is topologically slice by Theorem \ref{Thm:Slice} but not by Theorem \ref{Thm:COTThm} by doing multi-infection on the link $L$ in Fig.\ \ref{Fig:Example} along the mutli-disk pictured there. Nullhomotopies of $\eta_0$ and $\eta_1$ in the complement of slice disks for $L$ (in fact, just in the complement of $L$ in $S^3$), are shown in Figure \ref{Fig:ExampleNullhomotopy}. These nullhomotopies have 4 (self-)intersections.

Using {\tt SnapPy} (see \cite{cullerSnapPyComputerProgram23}), one can compute the signature of the link $L\cup\eta_1\cup\eta_2$ to be 3, and the Alexander polynomial to be $-4t^{-5}+36t^{-4}-144t^{-3}+336t^{-2}-504t^{-1}+504-336t+144t^2-35t^3+4t^4$. Then, by Corollary 1.5 in \cite{powellFourgenusLinkLevine2017}, the topological slice genus of this link is at least 3. And so, by the proof of Lemma \ref{Lemma:Surfaces}, in the complement of any collection of topological slice disks for $L$, immersed disks bounded by the $\eta_i$ must collectively have at least 3 (self-)intersections. So, in summary, any immersed disks bounded by the $\eta_i$ in the complement of a collection of topological slice disks for $L$ must have either 3 or 4 (self-)intersections

Thus by the discussion in Section \ref{Sec:Intro}, if immersed disks bounded by the $\eta_i$ have 3 (resp.\ 4) (self-)intersections, the multi-infection by a string link whose closure has Milnor's $\bar\mu$-invariants vanishing up to length 5 (resp.\ 6) but not up to length 6 (resp.\ 7) is topologically slice by Theorem \ref{Thm:Slice} but not by Theorem \ref{Thm:COTThm}. In Sections 7.2-4 of \cite{cochranDerivativesLinksMilnor1990}, for any $r>1$ and $\ell \geq 0$, Cochran constructs $r$-component links whose Milnor $\bar\mu$-invariants vanish up to length $\ell$ but not up to length $\ell+1$. One such class of links when $r=2$ is shown in Fig.\ \ref{Fig:Brunnian}. (As an aside, notice that this link is not topologically slice, or even (1)-solvable, for example by Corollary 3.5 in \cite{ottoSolvableFiltrationLink2014}.) Multi-infection by any string link whose closure is this link will be topologically slice by Theorem \ref{Thm:Slice} but not by Theorem \ref{Thm:COTThm}.
\end{ex}

\begin{remark}
This is a fairly flexible construction. Not only can one choose any string link whose closure is the the link in Figure \ref{Fig:Brunnian}, there are many different links that can come out of Cochran's construction which will have Milnor's $\bar\mu$-invariants vanishing up to length $\ell$ but not up to length $\ell+1$. Even more, notice that we only used facts about $\eta_0$ and $\eta_1$, and there are many multi-disks $\E$ which have these $\eta_i$, even without varying the link $L\sqcup\eta_0\sqcup\eta_1$.
\end{remark}

\bibliography{SatelliteSolvabilityNote}
\bibliographystyle{alpha}

\end{document}